\newtheorem{theorem}{\sc Theorem}[section]
\newtheorem{thm}[theorem]{\sc Theorem}
\newtheorem{lem}[theorem]{\sc Lemma}
\newtheorem{cor}[theorem]{\sc Corollary}
\newtheorem{ex}[theorem]{\sc Example}
 \newtheorem*{thmA}{Theorem A}
 \newtheorem*{thmB}{Theorem B}
 \newtheorem*{thmC}{Theorem C}
 \DeclareMathOperator{\PSL}{PSL}
 \DeclareMathOperator{\FC}{FC} 
 \DeclareMathOperator{\BFC}{BFC}  
 \DeclareMathOperator{\spec}{spec}
 \DeclareMathOperator{\ASL}{ASL}
 \DeclareMathOperator{\SL}{SL}
 \DeclareMathOperator{\Aut}{Aut} 
 \DeclareMathOperator{\Tor}{Tor}  
 \DeclareMathOperator{\cent}{Z} 
 \DeclareMathOperator{\centhat}{\hat{Z}} 
 \DeclareMathOperator{\centralizer}{C_G} 
 \DeclareMathOperator{\Alt}{A}
 \DeclareMathOperator{\Sym}{S} 
 \DeclareMathOperator{\GH}{GH}
 \DeclareMathOperator{\V}{V}
 \DeclareMathOperator{\N}{N}  
\title[Automorphism Orbits]{FC-groups with finitely many automorphism orbits}
\author[Bastos]{Raimundo A. Bastos}
\address{(Bastos) Departamento de Matem\'atica, Universidade de Bras\'ilia,
Brasilia-DF, 70910-900 Brazil}
\email{bastos@mat.unb.br}
\author[Dantas]{Alex C. Dantas}
\address{(Dantas) Departamento de Matem\'atica, Universidade de Bras\'ilia,
Brasilia-DF, 70910-900 Brazil}
\email{alexcdan@gmail.br}
\subjclass[2010]{20E36; 20F24.}
\keywords{Automorphism of infinite groups; FC-groups}
\begin{document}
\maketitle

\begin{abstract}
Let $G$ be a group. The orbits of the natural action of $\Aut(G)$ on $G$ are called ``automorphism orbits'' of $G$, and the number of automorphism orbits of $G$ is denoted by $\omega(G)$. In this paper we prove that if $G$ is an $\FC$-group  with finitely many automorphism orbits, then the derived subgroup $G'$ is finite and $G$ admits a decomposition $G = \Tor(G) \times D$, where $\Tor(G)$ is the torsion subgroup of $G$ and $D$ is a divisible characteristic subgroup of $\cent(G)$. We also show that if $G$ is an infinite $\FC$-group with $\omega(G) \leqslant 8$, then either $G$ is soluble or $G \cong \Alt_5 \times H$, where $H$ is an infinite abelian group with $\omega(H)=2$. Moreover, we describe the structure of the infinite non-soluble $\FC$-groups with at most eleven automorphism orbits.   
\end{abstract}

\maketitle

\section{Introduction}
An element $g$ of a group $G$ is called an $\FC$-element if it has a finite number of conjugates in $G$, that is, if the index $[G : \centralizer(g)]$ is finite, where $\centralizer(g) = \{h \in G \mid  g^{h} = g\}$.  The set $\centhat(G)$ of all $\FC$-elements of $G$ is a characteristic subgroup (see \cite[14.5.5]{Rob}). If each element of $G$ is an $\FC$-element, then $G$ is called an $\FC$-group. In particular, $G = \centhat(G)$. If there exists a positive integer $d$ such that $[G : \centralizer(g)] \leq d$ for all $g \in G$, then the group $G$ is called a $\BFC$-group, that is, there is a common bound for the size of conjugacy classes in $G$. The class of $\FC$-groups contains all abelian and all finite groups. 

Let $G$ be a group. The orbits of the natural action of $\Aut(G)$ on $G$ are called ``automorphism orbits'' of $G$, and the number of automorphism orbits of $G$ is denoted by $\omega(G)$.  It is interesting to ask what can we
say about ``$G$'' only knowing $\omega(G)$. It is obvious that $\omega(G)=1$ if and only if $G=\{1\}$, and it is well known that if $G$ is a finite group then $\omega(G) = 2$ if and only if $G$ is elementary abelian. In \cite{LD}, Laffey and MacHale proved that if $G$ is a finite non-soluble group with $\omega(G) \leqslant 4$, then $G$ is isomorphic to $\PSL(2,\mathbb{F}_4)$. Stroppel in \cite{S1} has shown that the only finite non-abelian simple groups $G$ with $\omega(G) \leq 5$ are the groups $\PSL(2,\mathbb{F}_q)$ with $q \in \{4,7,8,9\}$. In \cite{BD,BDG}, the authors prove that if $G$ is a finite non-soluble group with $\omega(G) \leq 6$, then $G$ is isomorphic to one of $\PSL(2,\mathbb{F}_q)$ with $q \in \{4,7,8,9\}$, $\PSL(3,\mathbb{F}_4)$ or $\ASL(2,\mathbb{F}_4)$ (answering a question of Stroppel, cf. \cite[Problem 2.5]{S1}). Here $\ASL(2,\mathbb{F}_4)$ is the affine group $\SL(2,\mathbb{F}_4) \ltimes \mathbb{F}_4^2$ where $\SL(2,\mathbb{F}_4)$ acts naturally on $\mathbb{F}_4^2$. For more details concerning automorphism orbits of finite groups see \cite{K02,K04,S99,Z}.  

Some aspects of automorphism orbits are also investigated for infinite groups. In \cite{MS1}, M\"aurer and Stroppel classified the groups with a nontrivial characteristic subgroup and with three orbits by automorphisms (see Section 5, below). Schwachh\"ofer and Stroppel in \cite{S2}, have shown that if $G$ is an abelian group with finitely many automorphism orbits, then $G = \Tor(G) \oplus D$, where $D$ is a characteristic torsion free divisible subgroup of $G$ and $\Tor(G)$ is the set of all torsion elements in $G$. Stroppel also investigated properties of topological groups with finitely many automorphism orbits (see \cite{S1} for more details).  

In the present article we consider infinite $\FC$-groups with finitely many automorphism orbits. Our results can be viewed as extensions of the above results.

\begin{thmA} 
Let $G$ be an infinite $\FC$-group with finitely many automorphism orbits. Then the derived subgroup $G'$ is finite. Moreover, $G$ admits a decomposition $\Tor(G) \times D$, where $D$ is a divisible characteristic subgroup of $\cent(G)$.
\end{thmA}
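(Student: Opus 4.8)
The plan is to first force finiteness of $G'$ from the orbit hypothesis and then peel off a divisible central direct factor realizing $G/\Tor(G)$.

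First I would observe that every $\varphi\in\Aut(G)$ carries the conjugacy class of $g$ bijectively onto that of $\varphi(g)$, so elements lying in one automorphism orbit have equal conjugacy-class sizes. As $\omega(G)<\infty$, only finitely many class sizes occur, hence they are bounded and $G$ is a $\BFC$-group; by B.\,H.~Neumann's theorem (cf.\ \cite{Rob}) this gives $|G'|<\infty$. I will also use the standard structure of $\FC$-groups (cf.\ \cite{Rob}): the torsion elements form a characteristic subgroup $T=\Tor(G)$, and $G/T$ is a torsion-free $\FC$-group, hence abelian. Finally, since $T$ is characteristic, $\Aut(G)$ induces finitely many orbits on $G/T$, so $\omega(G/T)<\infty$; the abelian theorem of Schwachh\"ofer and Stroppel \cite{S2} then shows $G/T$ is (torsion-free) divisible. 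Write $V=G/T$ and $\phi\colon G\to V$ for the projection, so that $V$ is a $\mathbb{Q}$-vector space.

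The decomposition will follow once I produce a divisible characteristic subgroup $D\leqslant\cent(G)$ with $D\cap T=1$ and $DT=G$: then $D\cong V$, and $G=T\times D$ because $D$ is central. Since such a $D$ is automatically torsion-free and must map onto $V$, the essential point is the identity $G=\cent(G)\,T$, i.e.\ that the divisible quotient $V$ is already visible inside the centre. I expect this to be the main obstacle, and I note that it genuinely requires divisibility: a Heisenberg-type $\FC$-group with $[a_i,b_i]=c$ of prime order has $G'$ finite yet $G/T\cong\mathbb{Z}^{(\infty)}$ with $\cent(G)\subseteq T$, so finiteness of $G'$ alone cannot suffice.

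To prove $G=\cent(G)T$ I would pass to $C=\centralizer(G')$. Because $G'$ is finite, $C$ is characteristic of finite index (it is the kernel of $G\to\Aut(G')$), and $C'\leqslant G'$ together with $C\leqslant\centralizer(G')$ forces $C'\leqslant\cent(C)$, so $C$ is nilpotent of class $\leqslant 2$ and $C'$ is a finite abelian group. As $V$ is divisible and $[G:C]<\infty$, the image of $C$ in $V$ is a finite-index subgroup, hence all of $V$; thus $CT=G$, and since $C'\leqslant T$ the divisible part $\bar D$ of the abelian group $C/C'$ (extracted via \cite{S2}, using $\omega(C)\leqslant\omega(G)$) satisfies $\bar D\cong V$. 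Now the commutator map $C/\cent(C)\times C/\cent(C)\to C'$ is biadditive and alternating with values in the \emph{finite} group $C'$; since the image of $\bar D$ in $C/\cent(C)$ is divisible, writing each such class as $n$ times another and taking $n=\exp(C')$ annihilates the pairing, so the full preimage $P\leqslant C$ of $\bar D$ lies in $\cent(C)$. This $P$ is an abelian characteristic subgroup with $\Tor(P)=C'$ and $P/\Tor(P)\cong V$, and one checks $PT=G$; applying \cite{S2} to $P$ yields $P=\Tor(P)\oplus D$ with $D$ divisible and characteristic, so $D\cap T=1$ and $\phi(D)=V$.

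It remains to upgrade $D\leqslant\cent(C)$ to $D\leqslant\cent(G)$, and here an abelian-quotient argument finishes cleanly: $D$ is characteristic, hence $G$-invariant, and for $d\in D$, $x\in G$ one has $\phi(d^{x})=\phi(d)$ because conjugation is trivial on the abelian group $V$; thus $d^{x}d^{-1}\in D\cap T=1$, giving $d^{x}=d$. Therefore $D\leqslant\cent(G)$, and together with $D\cap T=1$ and $DT=G$ this yields $G=\Tor(G)\times D$ with $D$ a divisible characteristic subgroup of $\cent(G)$. The delicate heart of the argument is the biadditivity step above, where divisibility of $V$ is precisely what kills the finite-valued commutator pairing and pushes the divisible part into the centre.
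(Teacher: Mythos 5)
Your proposal is correct, but its second half takes a genuinely different route from the paper's. Both arguments begin identically: conjugacy-class sizes are constant on automorphism orbits, so $G$ is a $\BFC$-group and $G'$ is finite by Neumann's theorem, and both use the Schwachh\"ofer--Stroppel lemma to see that $G/\Tor(G)$ is torsion-free divisible. From there the paper is much shorter: it applies Stroppel's lemma to $\cent(G)$ to get $\cent(G)=(\Tor(G)\cap\cent(G))\oplus D$, invokes Baer's theorem that $G/\cent(G)$ is a torsion group for $\FC$-groups, observes that finitely many orbits forces $G/\cent(G)$ to have finite exponent $m$, and then uses divisibility of $G/\Tor(G)$ to write any $g$ as $h^{m}k$ with $k\in\Tor(G)$ and $h^{m}\in\cent(G)$; this gives $G=\Tor(G)\cent(G)=\Tor(G)D$, hence $G=\Tor(G)\times D$. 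You avoid Baer's theorem and the exponent argument entirely: you work inside the finite-index characteristic subgroup $C=\centralizer(G')$, which is nilpotent of class at most $2$, and your key step --- killing the finite-valued biadditive commutator pairing by extracting $\exp(C')$-th roots, which is legitimate because the relevant classes lie in the divisible part $\bar D$ of $C/C'$ --- pushes $\bar D$'s preimage into $\cent(C)$; a second application of Stroppel's lemma and the equivariance argument modulo $\Tor(G)$ then place $D$ inside $\cent(G)$. Your route is longer and uses Stroppel's lemma twice, but it is more self-contained (elementary commutator calculus in place of Baer's theorem) and it isolates explicitly where divisibility forces centrality, which in the paper happens implicitly through the identity $h^{m}\in\cent(G)$. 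One harmless slip in a side remark: in your motivating Heisenberg-type example with infinite-order generators, $\cent(G)$ is \emph{not} contained in $\Tor(G)$ (the elements $a_i^{p}$ are central of infinite order); the example still makes your point, since there $G/\Tor(G)\cong\mathbb{Z}^{(\infty)}$ is not divisible, so no decomposition $\Tor(G)\times D$ with $D$ divisible can exist.
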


In general, infinite groups with small number of automorphism orbits need not be soluble. For instance, Higman, Neumann and Neumann have constructed a torsion free non-abelian simple group $S$ with $\omega(S) = 2$ (cf.  \cite[6.4.6]{Rob}). However, we prove the following solubility criterion for infinite $\FC$-groups in terms of the number of automorphism orbits.

\begin{thmB} \label{thm.soluble}
Let $G$ be an infinite $\FC$-group. 
\begin{itemize}
\item[$(a)$] If $\omega(G) \leqslant 4$, then $G$ is nilpotent;
\item[$(b)$] If $\omega(G) \leqslant 7$, then $G$ is soluble;
\item[$(c)$] If $G$ is a non-soluble group with $\omega(G) = 8$, then there exists an infinite abelian group $H$ with $\omega(H)=2$ such that $G$ is isomorphic to $\Alt_5 \times H$.
\end{itemize}
\end{thmB}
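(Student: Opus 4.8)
The common engine for all three parts is Theorem A. Writing $T:=\Tor(G)$, we have $G=T\times D$ with $D$ a central, torsion-free, divisible subgroup, i.e.\ a $\mathbb{Q}$-vector space, and $T$ a torsion $\FC$-group. Since $T$ and $D$ are both characteristic, every automorphism preserves each factor and is determined by its two restrictions, so $\Aut(G)\cong\Aut(T)\times\Aut(D)$; consequently orbits multiply and
\[
\omega(G)=\omega(T)\cdot\omega(D),\qquad \omega(D)=\begin{cases}1,& D=0,\\ 2,& D\ne 0,\end{cases}
\]
the value $2$ because $\GL$ of a nonzero $\mathbb{Q}$-vector space is transitive on nonzero vectors. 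As $D$ is central and abelian, $G$ is nilpotent (resp.\ soluble) if and only if $T$ is, so the whole problem reduces to the torsion part $T$.

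Next I record the structure of $T$. A torsion $\FC$-group is locally finite, and by Theorem A its derived subgroup $N:=T'$ is finite; hence $T$ is $N$-by-abelian and $T$ is soluble if and only if the finite group $N$ is soluble. Moreover $C:=C_T(N)$ has finite index in $T$ (since $T/C$ embeds in the finite group $\Aut(N)$), and $C'\le N$ with $N\le\cent(C)$ gives $C'\le\cent(C)$, so $C$ is nilpotent of class at most $2$; also $T/N$ is an infinite abelian quotient whenever $T$ is infinite, and because $N$ is characteristic one has $\omega(T/N)\le\omega(T)$. Finally, since automorphisms preserve orders and a finite non-soluble group has order divisible by at least three primes (so exhibits at least three distinct nontrivial element-orders by Cauchy), every non-soluble group satisfies $\omega\ge 4$.

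With these reductions the case $D\ne 0$ is immediate. Under (b), $\omega(T)=\omega(G)/2\le 3<4$ forces $N$ soluble, so $G$ is soluble. Under (a), $\omega(T)\le 2$; here $\omega(T)=2$ means all nontrivial elements share a prime order and form a single orbit, which cannot sit inside the finite characteristic subgroup $N$ unless $N=1$, so $T$ is trivial or elementary abelian and $G=T\times D$ is abelian. Under (c), $\omega(T)=4$; granting the threshold bound below (no infinite non-soluble $T$ has $\omega(T)<8$) forces $T$ to be \emph{finite}, whereupon Laffey--MacHale gives $T\cong\Alt_5$ and $G\cong\Alt_5\times H$ with $H:=D$ infinite abelian and $\omega(H)=2$. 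Thus all of Theorem~B is reduced to the single case $D=0$, i.e.\ $G=T$ infinite locally finite $\FC$, where I must prove two sharp bounds: \emph{(i)} non-nilpotent $\Rightarrow\omega(T)\ge 5$, and \emph{(ii)} non-soluble $\Rightarrow\omega(T)\ge 8$, with equality in (ii) giving the stated structure.

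The intended mechanism for both bounds is a \emph{doubling}. The non-soluble (resp.\ non-nilpotent) behaviour of $T$ is already witnessed inside the finite part $N$ and produces at least four (resp.\ the base) distinct orbits, distinguished by element order and by the central/non-central dichotomy that automorphisms respect; infiniteness then supplies an infinite \emph{homogeneous} abelian characteristic section (coming from $T/N$, and at the threshold from $\cent(C)$, which is then infinite), and combining it with the finite part roughly restores the product formula $\omega(T)\ge 4\cdot 2=8$. At equality one shows $N$ has exactly the prime orders $2,3,5$ with $\omega(N)=4$, hence $N\cong\Alt_5$ by Laffey--MacHale; since $\cent(\Alt_5)=1$, once $T=N\,C_T(N)$ is established the product is direct, $T\cong\Alt_5\times C_T(N)$ with $C_T(N)$ abelian, and the residual factor $2$ forces $\omega(C_T(N))=2$. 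The main obstacle is exactly this sharp counting in the infinite torsion case: converting the non-literal direct-product setting into a genuine multiplicative lower bound on $\omega$ (the threat being an infinite-index or finite-centre phenomenon as in infinite Heisenberg groups, which however only occurs for $\omega>8$), and then, at $\omega(T)=8$, pinning the non-soluble part to be precisely $\Alt_5$ and splitting it off as a direct factor. Everything else is bookkeeping on element orders via the formula $\omega(G)=\omega(T)\cdot\omega(D)$.
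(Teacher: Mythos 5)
Your reduction via Theorem A is sound and matches the paper's starting point: writing $G=\Tor(G)\times D$ with both factors characteristic, Stroppel's product lemma (Lemma 2.1 of the paper) gives $\omega(G)=\omega(\Tor(G))\cdot\omega(D)$ with $\omega(D)=2$ when $D\neq 1$, so the non-torsion case collapses onto the torsion case. But at that point your proof stops being a proof. Everything in parts (a), (b), (c) now rests on the two ``threshold bounds'' you yourself flag --- (i) an infinite non-nilpotent torsion $\FC$-group has $\omega\geq 5$, and (ii) an infinite non-soluble torsion $\FC$-group has $\omega\geq 8$, with the equality case pinned down --- and for these you offer only an ``intended mechanism'' (``doubling'', ``roughly restores the product formula''), explicitly conceding that ``the main obstacle is exactly this sharp counting.'' That sharp counting \emph{is} the theorem; deferring it is a genuine gap, not bookkeeping.

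Concretely, what is missing is the chain the paper uses to make the doubling rigorous. First, the center of $G$ is nontrivial: since $G'$ is finite, Hall's theorem (\cite[14.5.3]{Rob}) makes $\cent_2(G)$ of finite index, so infiniteness of $G$ forces $\cent(G)\neq 1$ (the paper's Lemma 2.3). Second --- and this is the step your ``central/non-central dichotomy'' never delivers --- one needs $G'\cap\cent(G)=1$; this does not come from order considerations but from the classification input that a \emph{finite non-soluble group with at most six automorphism orbits has trivial center} (the paper's Lemma 3.7, resting on \cite{LD,BDG}). Only then does Stroppel's lemma apply to the pair of characteristic subgroups $G'$ and $\cent(G)$ to give $\omega(G)\geq\omega(G')\,\omega(\cent(G))\geq 4\cdot 2=8$, proving (b). For the equality case (c), the paper has a further device you lack: if $G'\cap\cent(G)\neq 1$, pass to the infinite quotient $G/(G'\cap\cent(G))$, which has at most $7$ orbits, hence is soluble by (b), contradicting non-solubility of $G$; this is what forces $G=G'\times\cent(G)$ with $\omega(G')=4$ and $\omega(\cent(G))=2$, whence $G'\cong\Alt_5$ by Laffey--MacHale. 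Finally, part (a) in the torsion case is not a corollary of any product formula at all: the paper proves it by a separate orbit-counting argument (Lemma 3.6) showing a non-abelian infinite $\FC$-group with $\omega\leq 4$ is nilpotent of class $2$ and a $p$-group of bounded exponent, using Lemmas 3.1--3.4; nothing in your sketch substitutes for this. Until (i) and (ii) are actually proved, your argument establishes Theorem B only in the easy case $D\neq 1$ of parts (a) and (b).
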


Note that the group $\Sym_3 \times H_2$ is an infinite non-nilpotent $\FC$-group with $\omega(\Sym_3 \times H_2)=5$, where $\Sym_3$ is the symmetric group and $H_2$ is an infinite elementary abelian $2$-group (see Example \ref{ex.5}, below). Consequently, the bound on the number of orbits in Theorem B (a) is sharp. It should be noted that $\omega(\Alt_5 \times \mathbb{Q})=8$, where $\Alt_5$ is the alternating group and $\mathbb{Q}$ is the additive group of the field of rational numbers. In particular, the bound obtained in Theorem B (b) cannot be improved (see Example \ref{ex.5}, below). 

The famous Schur Theorem says that if $G$ is central-by-finite, then the order of the derived subgroup $G'$ is finite (cf. \cite[10.1.4]{Rob}). In particular, $G$ is a $\BFC$-group. Later, Neumann proved that the group $G$ is a $\BFC$-group if and only if the derived subgroup $G'$ is finite (cf. \cite[14.5.11]{Rob}). It is well known that $\BFC$-groups need not be central-by-finite (cf. \cite{N}). By Theorem B, every infinite non-soluble $\FC$-group with at most eight automorphism orbits is central-by-finite. We obtain the following related result.   

\begin{thmC}
Let $G$ be an infinite non-soluble $\FC$-group with at most eleven automorphism orbits. Then $G$ is central-by-finite.  
\end{thmC}

The above theorem is no longer valid if the assumption of non-solubility of $G$ is dropped (see Example \ref{ex.H}, below). Moreover, the bound obtained in the above result cannot be improved (see Example \ref{ex.12}, below). 

\section{Proof of Theorem A}

Let $G$ be a group and $g$ an element in $G$. The {\it automorphism orbit} of $g$ in the group $G$ is denoted by $g^{\Aut(G)} =\{g^{\alpha} \mid  \alpha \in \Aut(G)\}$. If $K$ is a subgroup of $G$, the notation $K = \uplus_{i=1}^{n} g_{i}^{\Aut(G)}$ means that $$K = \displaystyle \bigcup_{i=1}^{n} g_{i}^{\Aut(G)}$$ and $g_i^{\Aut(G)} \cap g_j^{\Aut(G)} = \emptyset$ if $i \neq j$.

\begin{lem} \label{lemma2}
(Stroppel, \cite[Lemma 3.1]{S}) Let $G$ be a nontrivial group with finitely many automorphism orbits. Suppose that there exist nontrivial characteristic subgroups $M$ and $N$ such that $M \cap N = 1$. Then 
$$\omega(G) \geqslant \omega(M)\omega(N) \geqslant 2 \cdot \omega(N).$$
Moreover, if $G = M \times N$, then $\omega(G) = \omega(M) \cdot \omega(N)$.  
\end{lem}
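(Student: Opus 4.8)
The plan is to exploit the fact that, since $M$ and $N$ are characteristic, every $\alpha \in \Aut(G)$ restricts to automorphisms $\alpha|_M \in \Aut(M)$ and $\alpha|_N \in \Aut(N)$. First I would record the elementary but crucial observation that for any $m \in M$ the orbit $m^{\Aut(G)}$ is contained in $M$ and satisfies $m^{\Aut(G)} \subseteq m^{\Aut(M)}$; hence the partition of $M$ into $\Aut(G)$-orbits refines the partition into $\Aut(M)$-orbits, and in particular two elements of $M$ lying in the same $\Aut(G)$-orbit lie in the same $\Aut(M)$-orbit. The same holds for $N$. Second, since $M \cap N = 1$, the multiplication map $M \times N \to G$, $(m,n) \mapsto mn$, is injective: from $m_1 n_1 = m_2 n_2$ one gets $m_2^{-1} m_1 = n_2 n_1^{-1} \in M \cap N = 1$.

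For the inequality $\omega(G) \geqslant \omega(M)\omega(N)$, I would choose representatives $m_1, \dots, m_s$ of the $\Aut(M)$-orbits of $M$ (so $s = \omega(M)$) and representatives $n_1, \dots, n_t$ of the $\Aut(N)$-orbits of $N$ (so $t = \omega(N)$), and claim that the $st$ elements $m_i n_j$ lie in pairwise distinct $\Aut(G)$-orbits. Indeed, if $\alpha(m_i n_j) = m_{i'} n_{j'}$ for some $\alpha \in \Aut(G)$, then $\alpha(m_i)\alpha(n_j) = m_{i'} n_{j'}$ with $\alpha(m_i) \in M$ and $\alpha(n_j) \in N$, so uniqueness of the factorization forces $\alpha(m_i) = m_{i'}$ and $\alpha(n_j) = n_{j'}$; by the refinement observation this yields that $m_i, m_{i'}$ are $\Aut(M)$-equivalent and $n_j, n_{j'}$ are $\Aut(N)$-equivalent, whence $i = i'$ and $j = j'$. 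This produces $\omega(M)\omega(N)$ distinct orbits. The remaining inequality $\omega(M)\omega(N) \geqslant 2\omega(N)$ follows from $\omega(M) \geqslant 2$, which holds because $M \neq 1$ forces at least two orbits in $M$, namely the fixed singleton $\{1\}$ and the orbit of any nontrivial element (an automorphism cannot send a nontrivial element to $1$).

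For the equality when $G = M \times N$, I would use the standard embedding $\Aut(M) \times \Aut(N) \hookrightarrow \Aut(G)$: each pair $(\beta, \gamma)$ defines an automorphism of $G$ by $mn \mapsto \beta(m)\gamma(n)$, using that elements of $M$ and $N$ commute. This gives the converse to the previous paragraph: if $m, m'$ are $\Aut(M)$-equivalent and $n, n'$ are $\Aut(N)$-equivalent, then the automorphism $(\beta, \gamma)$ with $\beta(m) = m'$ and $\gamma(n) = n'$ sends $mn$ to $m'n'$, so $mn$ and $m'n'$ lie in the same $\Aut(G)$-orbit. Together with the injectivity established above, the assignment $mn \mapsto (m^{\Aut(M)}, n^{\Aut(N)})$ descends to a bijection between the $\Aut(G)$-orbits of $G = MN$ and the pairs of orbits, giving $\omega(G) = \omega(M)\omega(N)$.

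I expect the main subtlety to be keeping straight the direction of the refinement between $\Aut(G)$-orbits and $\Aut(M)$-orbits inside $M$: these need not coincide, and the lower-bound argument works precisely because choosing representatives of the coarser $\Aut(M)$-orbits still forces distinct $\Aut(G)$-orbits. The equality case, by contrast, is where the hypothesis $M \cap N = 1$ must be strengthened to a genuine direct decomposition, since realizing an arbitrary pair $(\beta, \gamma)$ of component automorphisms as a single automorphism of $G$ relies on the commuting of the two factors; this is the one place where the direct-product hypothesis is essential.
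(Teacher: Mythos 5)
The paper does not actually prove this lemma: it is quoted from Stroppel \cite[Lemma 3.1]{S} and used as a black box, so there is no internal proof to compare against. Your argument is a correct, self-contained proof of the cited statement, and it is the standard one. The three pillars are all sound: (i) characteristic subgroups are $\Aut(G)$-invariant, so restriction gives $m^{\Aut(G)} \subseteq m^{\Aut(M)}$ for $m \in M$ (the refinement direction is stated correctly, and it is exactly what makes representatives of the coarser $\Aut(M)$-orbits land in pairwise distinct $\Aut(G)$-orbits); (ii) $M \cap N = 1$ gives uniqueness of factorizations $mn$, which splits any relation $\alpha(m_i n_j) = m_{i'} n_{j'}$ into componentwise relations; (iii) when $G = M \times N$, the embedding $\Aut(M) \times \Aut(N) \hookrightarrow \Aut(G)$ supplies the converse implication, so the orbit of $mn$ is determined exactly by the pair of component orbits, giving equality.

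One small correction to your closing commentary, which does not affect the proof itself: you locate the essential use of the direct-product hypothesis in ``the commuting of the two factors.'' In fact the commuting is already automatic under the weaker hypothesis of the first part: $M$ and $N$ are characteristic, hence normal, so $[M,N] \subseteq M \cap N = 1$. What $G = M \times N$ genuinely adds is twofold: that $MN$ exhausts $G$ (so every $\Aut(G)$-orbit contains an element of the form $mn$, without which one only gets the inequality), and that the componentwise map $mn \mapsto \beta(m)\gamma(n)$ is an automorphism of all of $G$ rather than merely of the subgroup $MN$. With $M \cap N = 1$ alone, $MN$ can be a proper subgroup and $G$ can have additional orbits outside it, which is precisely why only a lower bound holds in general.
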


\begin{lem} (Stroppel, \cite[Lemma 1.1]{S2} \label{Stroppel1})
If $G$ is an abelian group with finitely many automorphism orbits, then $G = \Tor(G) \oplus D$, where $D$ is a characteristic torsion free divisible subgroup of $G$. In particular, $D$ is the additive group of a vector space over $\mathbb{Q}$.
\end{lem}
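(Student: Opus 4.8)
The plan is to establish three things in turn: that the torsion subgroup $T := \Tor(G)$ has finite exponent, that the torsion-free quotient $A := G/T$ is divisible, and that $T$ splits off via a characteristic complement. Throughout I use that automorphisms of an abelian group preserve the order of each element and that $T$, being the set of all torsion elements, is characteristic (indeed fully invariant).

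First I would show $T$ has finite exponent. For each positive integer $k$ the set of elements of order exactly $k$ is invariant under $\Aut(G)$, hence is a union of automorphism orbits, and the sets attached to distinct values of $k$ are disjoint. If $T$ had infinite exponent, then infinitely many distinct finite orders would occur, forcing $\omega(G) = \infty$. So $T$ has exponent dividing some fixed $n$.

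The crux is the divisibility of $A = G/T$, which I would prove using $p$-heights. Since $T$ is characteristic, each $\alpha \in \Aut(G)$ induces $\bar\alpha \in \Aut(A)$, and automorphisms preserve the $p$-height $h_p$ of elements of $A$; hence the map $g \mapsto h_p(g + T)$ is constant on each orbit of $G$ and therefore takes at most $\omega(G)$ values. Suppose $A \neq pA$ for some prime $p$. Then there is $\bar a \in A$ with $h_p(\bar a) = 0$; lifting to $a \in G$ and using that in a torsion-free group $h_p(p^j \bar a) = h_p(\bar a) + j = j$, the elements $a, pa, p^2 a, \dots$ lie in pairwise distinct orbits (their images have distinct finite $p$-heights, so are nonzero and none lies in $T$). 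This contradicts $\omega(G) < \infty$. Hence $A = pA$ for every prime $p$, so $A$ is a torsion-free divisible group, i.e. the additive group of a $\mathbb{Q}$-vector space. I expect this height computation — and in particular the observation that a single element of height $0$ spawns an infinite chain of distinct heights — to be the main obstacle, since it is precisely where finiteness of $\omega$ is converted into divisibility.

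Finally I would produce the characteristic complement by a direct choice rather than a homological splitting. Set $D := nG$, which is fully invariant, hence characteristic. Because $A$ is torsion-free we have $G[n] = T$, so $D = nG \cong G/T = A$ is torsion-free and divisible; because $A$ is divisible we have $n(G/T) = A$, whence $T + D = G$; and $T \cap D = 0$ since $T$ is torsion while $D$ is torsion-free. Thus $G = T \oplus D = \Tor(G) \oplus D$ with $D$ a characteristic torsion-free divisible subgroup, as required. I would remark that taking $D = nG$ sidesteps the potentially delicate vanishing of $\mathrm{Ext}^1_{\mathbb{Z}}(A, T)$: the boundedness of $T$ obtained in the first step is exactly what makes this explicit complement available.
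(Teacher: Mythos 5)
Your proof is correct and complete. One structural point first: the paper does not prove this lemma at all --- it is imported verbatim from Schwachh\"ofer and Stroppel \cite[Lemma 1.1]{S2} --- so there is no in-paper argument to compare against, and your submission is a genuinely self-contained proof of a quoted result. All three steps check out. Orders are orbit invariants, so finitely many orbits force finitely many element orders and hence $\exp(\Tor(G)) \mid n$ for some $n$. Since $\Tor(G)$ is fully invariant, every $\alpha \in \Aut(G)$ induces an automorphism of $A = G/\Tor(G)$, which preserves $p$-heights; your formula $h_p(p^j\bar a) = j$ is valid because $A$ is torsion-free and $h_p(\bar a)=0$ is finite, so a single element of height $0$ does spawn the elements $a, pa, p^2a, \dots$ lying in pairwise distinct orbits, contradicting $\omega(G) < \infty$ --- this is indeed the step where finiteness of $\omega$ is converted into divisibility of $A$. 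Finally, $D = nG$ is fully invariant (hence characteristic), the multiplication-by-$n$ map gives $D \cong G/G[n] = G/\Tor(G)$ (so $D$ is torsion-free and divisible), $T + D = G$ follows from $nA = A$, and $T \cap D = 0$ is clear; your choice of the explicit complement $nG$, made possible by the boundedness of $\Tor(G)$ established in the first step, is exactly what secures characteristicity and bypasses any appeal to $\mathrm{Ext}$ or to Baer's splitting theorems. A small remark: your parenthetical ``because $A$ is torsion-free we have $G[n]=T$'' is slightly overdressed --- $ng=0$ already forces $g$ torsion, and $n\Tor(G)=0$ gives the reverse inclusion, so $G[n]=\Tor(G)$ needs nothing about $A$. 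For comparison, an alternative route seen in the literature derives divisibility by noting that the characteristic chain $G \supseteq 2!\,G \supseteq 3!\,G \supseteq \cdots$ consists of unions of orbits and must therefore stabilize, which forces the stable term to be divisible; your height argument achieves the same end with the same orbit-counting spirit, just localized at one prime at a time.
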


Schur \cite[10.1.4]{Rob} has shown that if $G$ is a central-by-finite group then the derived subgroup $G'$ is finite. In particular, $G$ is a $\BFC$-group. It is well known that the converse of Schur's Theorem does not hold (see \cite{N}). However, Hall \cite[14.5.3]{Rob} has shown that if $G$ is a group whose the $k$-th term of the lower central series $\gamma_{k+1}(G)$ is finite, then $\cent_{2k}(G)$. The next result is an immediate consequence of Hall's result  in the context of infinite BFC-groups. We supply the proof for the reader's convenience. 

\begin{lem} \label{lem.center}
Let $G$ be an infinite $\FC$-group with finitely many automorphism orbits. Then the center $\cent(G)$ is non-trivial.  
\end{lem}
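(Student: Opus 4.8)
The plan is to deduce the statement from Hall's theorem, and the first task is to recognize that $G$ is a $\BFC$-group. The key observation is that automorphism orbits refine the partition of $G$ into conjugacy classes in a size-preserving way: if $h = g^{\alpha}$ for some $\alpha \in \Aut(G)$, then $\centralizer(h) = \centralizer(g)^{\alpha}$, so $[G : \centralizer(h)] = [G : \centralizer(g)]$, since an automorphism maps $\centralizer(g)$ bijectively onto $\centralizer(g)^{\alpha}$ and hence preserves the index. Thus the conjugacy class size $[G : \centralizer(g)]$ is constant on each automorphism orbit. Since $G$ is an $\FC$-group every such index is finite, and since $\omega(G)$ is finite these indices take only finitely many values; letting $d$ be their maximum gives $[G : \centralizer(g)] \leqslant d$ for all $g \in G$, so $G$ is a $\BFC$-group.

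With this in hand I would invoke Neumann's theorem (\cite[14.5.11]{Rob}) to conclude that $\gamma_2(G) = G'$ is finite, and then apply Hall's theorem (\cite[14.5.3]{Rob}) with $k = 1$: the finiteness of $\gamma_2(G)$ forces the second centre $\cent_2(G)$ to have finite index in $G$. As $G$ is infinite, $\cent_2(G)$ must then be infinite, and in particular non-trivial. It remains to descend from $\cent_2(G)$ to $\cent(G)$, which is purely formal: if one had $\cent(G) = 1$, then $\cent_2(G)/\cent(G) = \cent(G/\cent(G)) = \cent(G) = 1$, forcing $\cent_2(G) = 1$ and contradicting that $\cent_2(G)$ is infinite. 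Therefore $\cent(G) \neq 1$, as required.

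I expect the only genuine content to lie in the first step, namely the passage from ``finitely many automorphism orbits'' to the $\BFC$ condition, since everything afterward is a direct citation of Hall's theorem together with the elementary fact that a trivial centre forces the whole upper central series to vanish. The one point to state carefully is that Hall's result must be used in its quantitative form, giving \emph{finite index} of $\cent_2(G)$ in $G$; it is precisely this index finiteness, combined with the infiniteness of $G$, that produces an infinite and hence non-trivial second centre.
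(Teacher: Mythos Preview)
Your argument is correct and follows essentially the same route as the paper: establish the $\BFC$ property, apply Neumann's theorem to get $G'$ finite, then Hall's theorem to get $[G:\cent_2(G)]$ finite, and conclude $\cent(G)\neq 1$. The only difference is presentational: the paper cites Theorem~A for the finiteness of $G'$ (whose proof in turn invokes Lemma~\ref{lemma1} for the $\BFC$ step), whereas you unpack that $\BFC$ argument directly and also spell out the elementary descent from $\cent_2(G)\neq 1$ to $\cent(G)\neq 1$, which the paper leaves implicit.
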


\begin{proof}
By Theorem A, the derived subgroup $G'$ is finite. By Hall's theorem \cite[14.5.3]{Rob}, the second center $\cent_2(G)$ has finite index in $G$. Since $G$ is infinite, it follows that the center $\cent(G)$ is non-trivial. The proof is complete.   
\end{proof}

\begin{lem} \label{lem.abelian}
Let $G$ be an infinite $\FC$-group with $\omega(G) = 2$. Then $G$ is the additive group of a vector space, that is, either a torsion free divisible abelian group or an elementary abelian $p$-group for some prime $p$.     
\end{lem}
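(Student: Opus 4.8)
The plan is to first prove that $G$ is abelian and then pin down its isomorphism type using the fact that all nontrivial elements lie in a single automorphism orbit.

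Since $\omega(G)=2$, the set $G \setminus \{1\}$ is a single orbit under $\Aut(G)$, and I would begin by recording the following elementary principle: if $K$ is a \emph{nontrivial} characteristic subgroup of $G$, then $K = G$. Indeed, choosing any $1 \neq k \in K$, its orbit $k^{\Aut(G)}$ equals $G \setminus \{1\}$; but $K$ is characteristic, so $k^{\Aut(G)} \subseteq K$, whence $G \setminus \{1\} \subseteq K$ and therefore $K = G$. Applying this to the center, which is nontrivial by Lemma \ref{lem.center} and always characteristic, forces $\cent(G) = G$, i.e. $G$ is abelian. (Alternatively, one could apply the principle to $G'$: if $G$ were non-abelian then $G' \neq 1$ would give $G' = G$, contradicting the finiteness of $G'$ provided by Theorem A.)

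Next I would use that automorphisms preserve the order of elements. Since $\Aut(G)$ is transitive on $G \setminus \{1\}$, all nontrivial elements of $G$ share a common order $m \in \{2,3,\dots\} \cup \{\infty\}$. If $m$ is finite, then $m$ must be prime: writing $m = ab$ with $1 < a < m$ and taking $g$ of order $m$, the element $g^a$ would be a nontrivial element of order $m/a < m$, a contradiction. Hence every nontrivial element has prime order $p$, and being abelian, $G$ is an elementary abelian $p$-group, that is, the additive group of an $\mathbb{F}_p$-vector space.

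Finally, if $m = \infty$ then $G$ is torsion free, so $\Tor(G) = 1$; by Lemma \ref{Stroppel1} the abelian group $G$ decomposes as $\Tor(G) \oplus D = D$ with $D$ torsion free divisible, so $G$ is the additive group of a $\mathbb{Q}$-vector space. In both cases $G$ is the additive group of a vector space, as claimed. The only real content is the abelian step; once that is in hand everything reduces to order preservation and Stroppel's structure theorem for abelian groups with finitely many orbits, so I do not anticipate a genuine obstacle.
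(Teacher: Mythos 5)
Your proof is correct and follows essentially the same route as the paper: invoke Lemma \ref{lem.center} to get a nontrivial center, use the single nontrivial orbit to force $\cent(G)=G$, and then identify the abelian group as a vector space via order considerations. The paper's proof is just a terser version of this, leaving implicit both the characteristic-subgroup principle and the final case split (prime order versus torsion free divisible via Lemma \ref{Stroppel1}) that you spell out.
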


\begin{proof}
By Lemma \ref{lem.center}, the center $\cent(G)$ is nontrivial. It follows that $G = \cent(G)$. In particular, $G$ is the additive group of a vector space over some field $\mathbb{F}$. The result follows.
\end{proof}

\begin{lem} \label{lemma1}
Let $G$ be an $\FC$-group with finitely many automorphism orbits. Then $G$ is a $\BFC$-group.
\end{lem}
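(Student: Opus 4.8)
The plan is to bound the sizes of conjugacy classes uniformly. Recall that $G$ is a $\BFC$-group precisely when there is an integer $d$ with $[G:\centralizer(g)]\leqslant d$ for all $g\in G$. I would therefore show that only finitely many values occur among the conjugacy-class sizes $[G:\centralizer(g)]$, $g\in G$; since each such index is finite by the $\FC$ hypothesis, their maximum will serve as the required $d$.

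The key step is that the conjugacy-class size is an automorphism invariant. If $\alpha\in\Aut(G)$ and $g\in G$, then conjugation commutes with $\alpha$ in the sense that $(g^{x})^{\alpha}=(g^{\alpha})^{x^{\alpha}}$ for all $x\in G$; hence $\alpha$ carries the centralizer $\centralizer(g)$ onto $\centralizer(g^{\alpha})$, and being an automorphism it preserves indices, so $[G:\centralizer(g^{\alpha})]=[G:\centralizer(g)]$. Consequently the function $g\mapsto[G:\centralizer(g)]$ is constant on each automorphism orbit $g^{\Aut(G)}$. Since the orbits partition $G$ and there are only $\omega(G)$ of them, only finitely many distinct class sizes can occur.

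To finish, I would choose representatives $g_{1},\dots,g_{n}$ of the $n=\omega(G)$ automorphism orbits and set $d=\max_{i}[G:\centralizer(g_{i})]$, a maximum of finitely many finite numbers. Every $g\in G$ lies in some orbit $g_{i}^{\Aut(G)}$, so $[G:\centralizer(g)]=[G:\centralizer(g_{i})]\leqslant d$, which proves that $G$ is a $\BFC$-group. I expect no serious obstacle: the only point demanding care is the invariance established in the previous step, and the finiteness of each index is exactly the $\FC$ assumption. One could alternatively dispose of the finite case trivially and, in the infinite case, invoke Theorem A together with the equivalence ``$G'$ finite $\Leftrightarrow$ $G$ is $\BFC$''; but the direct argument above avoids any appeal to Theorem A, and is thus available as an ingredient toward it.
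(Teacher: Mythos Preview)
Your proof is correct and follows exactly the same approach as the paper's: the conjugacy-class size is constant on each $\Aut(G)$-orbit, so with only finitely many orbits there is a uniform bound. You have simply spelled out the invariance step and the choice of $d$ in more detail than the paper does, and your remark that the direct argument must not rely on Theorem~A (since this lemma feeds into its proof) is well taken.
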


\begin{proof}
The size of the conjugacy class $g^G$ is constant on each orbit under $\Aut(G)$. If $\omega(G)$ is finite then there is a finite bound for those sizes.
\end{proof}

We are now in a position to prove Theorem A. 

\begin{proof}[Proof of Theorem A] Assume that $G$ is an $\FC$-group with finitely many automorphism orbits. We claim that $G$ is a $\BFC$-group and admits a decomposition $D \times \Tor(G)$, where $D$ is a divisible characteristic subgroup of $\cent(G)$.   

By Lemma \ref{lemma1}, $G$ is a BFC-group. By Neumann's result \cite[14.5.11]{Rob}, the derived subgroup $G'$ is finite. It remains to prove the decomposition $G = \Tor(G) \times D$. By Lemma \ref{Stroppel1}, $\cent(G) = (\Tor(G) \cap \cent(G)) \oplus D$, where $D$ is a characteristic subgroup of $\cent(G)$. By Baer's result \cite[14.5.6]{Rob}, $G/\cent(G)$ is a torsion group. Since $\omega(G)$ is finite, we conclude that $G / \cent(G)$ has finite exponent. Set $m= \exp(G/\cent(G))$. Since $G$ is an $\FC$-group, it follows that $\Tor(G)$ is a subgroup of $G$ (cf. \cite[14.5.9]{Rob}).  By Lemma \ref{Stroppel1}, $G/\Tor(G)$ is a divisible group. Let $g \in G \setminus \Tor(G)$. Hence there exists $h \in G$ such that $g = h^{m}k$, where $k \in \Tor(G)$ and thus $G = \Tor(G)\cent(G) = \Tor(G)D$. On the other hand, $G = \Tor(G) \times D$, because $D$ is torsion free. The proof is complete.  
\end{proof}

\begin{cor} \label{cor.odd}
Let $G$ be an infinite $\FC$-group in which $\omega(G)$ is an odd number. Then $G$ has finite exponent.  
\end{cor}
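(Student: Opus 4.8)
The plan is to exploit the parity of $\omega(G)$ together with the direct decomposition furnished by Theorem A. By Theorem A we may write $G = \Tor(G) \times D$, where $D$ is a divisible characteristic subgroup of $\cent(G)$; in particular $D$ is torsion free and, by Lemma \ref{Stroppel1}, it is the additive group of a vector space over $\mathbb{Q}$. The first goal is to show that $D$ is trivial. Suppose instead that $D \neq 1$. Since every group endomorphism of a torsion free divisible abelian group is automatically $\mathbb{Q}$-linear, $\Aut(D)$ coincides with the general linear group of this $\mathbb{Q}$-vector space and hence acts transitively on its nonzero elements; thus $\omega(D) = 2$. If $\Tor(G) = 1$, then $G = D$ and $\omega(G) = 2$; otherwise $\Tor(G)$ and $D$ are nontrivial characteristic subgroups with $\Tor(G) \cap D = 1$, and the multiplicativity part of Lemma \ref{lemma2}, applied to the direct product $G = \Tor(G) \times D$, yields $\omega(G) = \omega(\Tor(G)) \cdot \omega(D) = 2\,\omega(\Tor(G))$. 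In either case $\omega(G)$ is even, contradicting the hypothesis that $\omega(G)$ is odd.

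Therefore $D = 1$ and $G = \Tor(G)$ is a torsion group. The remaining step is routine: any automorphism preserves the order of an element, so this order is constant along each orbit $g^{\Aut(G)}$. As $\omega(G)$ is finite, only finitely many element orders $n_1, \dots, n_k$ occur in $G$, and since $G$ is torsion its exponent equals $\operatorname{lcm}(n_1, \dots, n_k)$, which is finite.

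I do not expect a serious obstacle here: the heart of the argument is simply that a nontrivial torsion free divisible direct factor contributes an even factor to the orbit count, so that oddness of $\omega(G)$ forces such a factor to vanish. The only point requiring a little care is the equality $\omega(D) = 2$, which rests on the fact that the additive endomorphisms of a $\mathbb{Q}$-vector space are exactly its $\mathbb{Q}$-linear maps, so that $\Aut(D)$ acts transitively on nonzero vectors. This observation, together with the already-established multiplicativity of $\omega$ over direct products and the invariance of element order under automorphisms, makes the conclusion immediate.
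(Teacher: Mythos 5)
Your proposal is correct and follows essentially the same route as the paper: decompose $G = \Tor(G) \times D$ via Theorem A, note $\omega(D) = 2$ when $D \neq 1$ (the paper cites Lemma \ref{Stroppel1} for this; your $\mathbb{Q}$-linearity argument is just the explicit justification), and use the multiplicativity in Lemma \ref{lemma2} to force $\omega(G)$ even, a contradiction. Your treatment is slightly more careful than the paper's in two harmless respects---you handle the degenerate case $\Tor(G)=1$ separately, and you spell out the final step that a torsion group with finitely many automorphism orbits has finite exponent, which the paper leaves implicit.
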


\begin{proof}
According to Theorem A, $G = \Tor(G) \times D$, where $D$ is a divisible characteristic subgroup of $G$. Suppose that $D$ is non-trivial. By Lemma \ref{Stroppel1}, $\omega(D) = 2$. By Lemma \ref{lemma2},   $$\omega(G) = \omega(\Tor(G))\omega(D) = 2 \omega(\Tor(G)),$$ a contradiction. The proof is complete.  
\end{proof}

\begin{cor} \label{Cor.na.5}
Let $G$ be a non-abelian $\FC$-group with $\omega(G) \leq 5$. Then $G$ has finite exponent.
\end{cor}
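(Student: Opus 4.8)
The plan is to reduce the statement to showing that the divisible factor $D$ in the decomposition of Theorem A vanishes. First I would dispose of the trivial case: a finite group automatically has finite exponent, so I may assume $G$ is infinite. Theorem A then applies and yields $G = \Tor(G) \times D$, where $D$ is a torsion free divisible characteristic subgroup contained in $\cent(G)$. Since $D$ is torsion free, $G$ has finite exponent precisely when $D = 1$ (in which case $G = \Tor(G)$); so the entire corollary comes down to the single claim that $D = 1$.

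To prove $D = 1$, I would argue by contradiction, assuming $D \neq 1$. As $D$ is the additive group of a $\mathbb{Q}$-vector space by Lemma \ref{Stroppel1}, we have $\omega(D) = 2$. If $\Tor(G) = 1$ then $G = D$ is abelian, contradicting the hypothesis. Otherwise $\Tor(G)$ and $D$ are both nontrivial characteristic subgroups with $G = \Tor(G) \times D$, so the product formula of Lemma \ref{lemma2} gives
$$\omega(G) = \omega(\Tor(G)) \cdot \omega(D) = 2\,\omega(\Tor(G)).$$
The bound $\omega(G) \leqslant 5$ forces $\omega(\Tor(G)) \leqslant 2$, and since $\Tor(G)$ is nontrivial this yields $\omega(\Tor(G)) = 2$. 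Now $\Tor(G)$ is an $\FC$-group with two automorphism orbits, hence abelian: by Lemma \ref{lem.abelian} if it is infinite, and because a finite group with $\omega = 2$ is elementary abelian if it is finite. As $D \leqslant \cent(G)$ is also abelian, $G = \Tor(G) \times D$ is abelian, contradicting the hypothesis once more. Therefore $D = 1$.

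It then remains to verify that $G = \Tor(G)$, a torsion group with finitely many automorphism orbits, has finite exponent. Here I would use that every automorphism preserves the order of an element, so each orbit consists of elements of a single finite order; finitely many orbits thus allow only finitely many distinct orders, whence the exponent is finite. I expect this final observation — the passage from ``finitely many orbits'' to ``bounded orders'' — together with the careful treatment of the two cases ($\Tor(G)$ finite versus infinite) when establishing $D = 1$, to be the only genuinely delicate points; everything else is a direct application of Theorem A and the product formula of Lemma \ref{lemma2}.
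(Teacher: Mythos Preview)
Your proof is correct and follows essentially the same approach as the paper: use the decomposition $G = \Tor(G) \times D$ from Theorem~A and show via Lemma~\ref{lemma2} that $D \neq 1$ forces $\omega(\Tor(G)) \leqslant 2$, hence $\Tor(G)$ abelian, hence $G$ abelian. The paper's version is terser---it invokes Corollary~\ref{cor.odd} to dispose of the odd cases $\omega(G)\in\{3,5\}$ first and then treats only $\omega(G)=4$---but the underlying argument is identical, and your explicit handling of the finite/infinite split for $\Tor(G)$ and the final passage from ``torsion with finitely many orbits'' to ``finite exponent'' fills in details the paper leaves implicit.
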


\begin{proof}
By Corollary \ref{cor.odd}, we can assume that $G$ has exactly four automorphism orbits. Suppose that $G$ is not a torsion group. By Theorem A, $G = D \times \Tor(G)$, where $D$ is a torsion free divisible group and so, $G$ is abelian.    
\end{proof}

\begin{ex}
Note that the group $\Sym_3 \times \mathbb{Q}$ is an infinite non-periodic $\FC$-group with $\omega(\Sym_3 \times \mathbb{Q}) = 6$ (Lemma \ref{lemma2}). Thus, the bound on the number of orbits in the above result cannot be improved.
\end{ex}

\section{Proof of Theorem B}

Let $H$ be a periodic group and $S$ a subset of $H$. The {\it spectrum} of $S$, denoted by $\spec(S)$, is defined to be the set of all orders of elements in $S$. A crucial observation that we will use many times is that two elements that lie in the same automorphism orbit have the same order. In particular, it is clear that if $G$ is a $p$-group with finitely many automorphism orbits, then $p^{\omega(G)} \geqslant \exp(G)$. The following result is a key argument to obtain bounds for the exponent $\exp(G)$, when $G$ is an infinite non-abelian $\BFC$-group.    

\begin{lem} \label{Lemma 4.2}
Let $G$ be an infinite $\FC$-group with finitely many automorphism orbits. Assume that $G$ is a non-abelian $p$-group. Then there exists $h \in G'$ and $g \in G \setminus G'$ such that $|h| = |g|$.
\end{lem}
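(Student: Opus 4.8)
The plan is to reduce the statement to showing that $\spec(G')$ and $\spec(G\setminus G')$ share a common value. By Theorem A the derived subgroup $G'$ is finite, and being a nontrivial finite $p$-group it has $\spec(G')=\{1,p,\dots,p^{c}\}$ with $p^{c}=\exp(G')$. Thus it suffices to produce $g\in G\setminus G'$ with $|g|\leqslant p^{c}$, since then any $h\in G'$ of the same order finishes the proof. Since $G$ is a $p$-group with finitely many automorphism orbits, $\exp(G)\leqslant p^{\omega(G)}$ is finite, so $G/G'$ is an infinite abelian $p$-group of finite exponent and its socle (the elements of order dividing $p$) is infinite.

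Next I would let $E$ be the preimage in $G$ of this socle and set $E_0=E\cap\centralizer(G')$. As $G'$ is finite, $\centralizer(G')$ has finite index in $G$, so $E_0$ is infinite; moreover every $x\in E_0$ satisfies $x^{p}\in G'$, and $E_0$ is nilpotent of class at most $2$ with $E_0'\leqslant G'$. The key computation is that for $x,y\in E_0$ one has $[x,y]^{p}=[x^{p},y]=1$, because $x^{p}\in G'$ and $y$ centralises $G'$; hence $\exp(E_0')$ divides $p$. I would then study the power map $x\mapsto x^{p}$. For $p$ odd this map is an endomorphism of $E_0$ (the class-$2$ correction term $[y,x]^{\binom{p}{2}}$ vanishes, since $p\mid\binom{p}{2}$ and $\exp(E_0')\mid p$), its image lies in the finite group $G'$, and therefore its kernel $\{x\in E_0:x^{p}=1\}$ has finite index and is infinite. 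This yields infinitely many elements of order $p$ in $G$; as only finitely many lie in $G'$, some $g\in G\setminus G'$ has order $p\leqslant p^{c}$, as required.

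For $p=2$ the squaring map is only a homomorphism modulo $E_0'$, so I would instead look at $\bar\phi\colon E_0\to E_0/E_0'$, $x\mapsto x^{2}E_0'$; its image is again finite, so $\{x\in E_0:x^{2}\in E_0'\}$ is infinite, and since $\exp(E_0')\mid 2$ these elements satisfy $x^{4}=1$. If infinitely many of them have order $2$ we conclude as before; otherwise infinitely many have order $4$ with square in $G'$ of order $2$, and $x\mapsto x^{2}$ sends this infinite set into the finitely many involutions of $G'$, so some fibre $\{x:x^{2}=w\}$ is infinite. Here I would exploit that $G$ is an FC-group: fixing one such $x_1$, the centraliser $\centralizer(x_1)$ has finite index, so it contains some $x_2$ with $x_2^{2}=w$ and $x_2\notin x_1G'$; then $u=x_1x_2^{-1}\notin G'$ and $u^{2}=x_1^{2}x_2^{-2}=1$, giving an involution outside $G'$.

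The main obstacle is precisely that a naive power of an element of $G\setminus G'$ may have order exceeding $\exp(G')$ and thus fall outside $\spec(G')$; what controls this is the identity $[x,y]^{p}=[x^{p},y]=1$ valid inside $\centralizer(G')$, which forces all the relevant $p$-th powers back into the finite group $G'$ and lets the pigeonhole on a finite target do its work. The one genuinely separate point is the case $p=2$, $\exp(G')=2$, handled by the square-root and centraliser argument above.
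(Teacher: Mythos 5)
Your reduction to the subgroup $E_0$ and your argument for odd $p$ are correct, and they take a genuinely different route from the paper: the paper assumes $\spec(G')\cap\spec(G\setminus G')=\emptyset$, builds an infinite abelian subgroup $C=\langle a_1,a_2,\dots\rangle$ by iterated centralizers with $a_i\notin\langle a_1,\dots,a_{i-1},G'\rangle$ (using that centralizers have finite index and that finitely generated periodic $\FC$-groups are finite, \cite[14.5.8]{Rob}), and then pigeonholes on orders and on $p$-th powers inside $C$; your class-two subgroup $E_0$ with $E_0'\leqslant\cent(E_0)$ of exponent $p$, together with the $p$-power endomorphism (valid for odd $p$ since $p\mid\binom{p}{2}$), replaces all of that with one application of the first isomorphism theorem. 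However, your $p=2$ case has a genuine gap at the final step: the inference ``the fibre $F_w=\{x:x^2=w\}$ is infinite and $C_G(x_1)$ has finite index, so $C_G(x_1)$ contains some $x_2\in F_w$ with $x_2\notin x_1G'$'' is a non sequitur. The fibre is merely an infinite \emph{subset}, not a subgroup or a union of cosets; an infinite subset can avoid a finite-index subgroup entirely, or meet it only inside the finite set $x_1G'$.

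What your step silently excludes is the configuration in which any two \emph{commuting} elements of $F_w$ lie in one and the same $G'$-coset, i.e.\ in which $F_w$ spreads over infinitely many cosets but pairwise non-commutatively; and $\BFC$-groups genuinely contain such configurations. In the paper's own group $\N(2)$ (Example \ref{ex.n(2)}), which satisfies all your hypotheses with $G'=E_0'=\langle c\rangle$ and $w=c$, the elements $x_i=a_1a_2\cdots a_ib_i$, $i\geqslant 1$, all have order $4$ and square $c$, lie in pairwise distinct $\langle c\rangle$-cosets, and satisfy $[x_i,x_j]=c\neq 1$ for all $i\neq j$. (Note that in a class-two group, for $x,y\in F_w$ one has $(xy^{-1})^2=[x,y]$, so finding two commuting fibre elements in distinct cosets is literally equivalent to what you are trying to prove; your appeal to finite index assumes it.) The lemma does hold for $\N(2)$, but only because its fibre also contains the commuting family $\{a_kb_k\}$, which your argument never locates. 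The repair is exactly the paper's device: inside $K=\{x\in E_0:x^2\in E_0'\}$ first construct an infinite \emph{abelian} subgroup $A=\langle y_1,y_2,\dots\rangle$ by iterated centralizers, choosing $y_{i+1}\in C_K(y_1,\dots,y_i)\setminus\langle y_1,\dots,y_i,G'\rangle$, which is possible by \cite[14.5.8]{Rob}; on $A$, squaring \emph{is} a homomorphism into the finite group $E_0'$, so its kernel is infinite and contains an involution outside $G'$. With that insertion your proof closes; as written, the $p=2$ case does not go through.
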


\begin{proof}
Since $G$ is a $p$-group with finitely many automorphism orbits, we conclude that the exponent $\exp(G)$ is finite. Suppose for contradiction that $\spec(G') \cap \spec(G\setminus G') = \emptyset$. Let $p^{s} = \exp(G')$ and let $A_{i} = \{g \in G \mid |g| = p^{s+i}\}$. Since $G$ is non-abelian, it follows that $s \geqslant 1$, so  $\spec(G') = \{1,p,\ldots,p^s\}$. It suffices to obtain an element $g \in G \setminus G'$ with $|g|=p$. If $a_{1} \in A_{1}$, then the subgroup $\langle a_{1}, G' \rangle$ is finite \cite[14.5.8]{Rob}. Thus there exists $a_{2} \in C_{G}(a_1)$ such that
$$a_{2} \notin \langle a_{1}, G' \rangle.$$
Since $\langle a_{2}, a_{1}, G' \rangle$ is finite, there exists $a_{3} \in \centralizer(a_{2}) \cap \centralizer(a_{1})$ such that
$$a_{3} \notin \langle a_{2}, a_{1}, G' \rangle.$$ 
Repeating this process, we get $a_{1}, a_{2}, a_{3}, ..., a_{n}, ... \in G \setminus G'$ such that $a_{i} \in \centralizer(a_{i-1}) \cap ... \cap \centralizer(a_{1})$ and 
$$a_{i} \notin \langle a_{i-1}, ..., a_{1}, G' \rangle$$
for any $i \in \mathbb{N}$. Thus $C:= \langle a_{1}, a_{2}, a_{3}, ..., a_{n}, ...\rangle$ is an infinite abelian group. 

Since $G$ has finite exponent, it follows that $C \cap A_{i}$ is infinite for some positive integer $i$. Set $t=\min \{i \in \mathbb{N} \mid C \cap A_i \ \text{is infinite}\}$. Then the set
$$\{a \in C \cap A_{t} \mid \, a^{p} = b \}$$
is infinite for some $b$ with order $p^{s+t-1}$. Choose 
$$a_{i_{1}}, ..., a_{i_{|b|}} \in \{a \in C \cap A_{t} \mid \, a^{p} = b \}.$$ 
Then
$$(a_{i_{1}} ... a_{i_{|b|}})^{p} = a_{i_{1}}^{p} ... a_{i_{|b|}}^{p} = b^{|b|} = 1$$
and
$$a_{i_{1}} ... a_{i_{|b|}} \notin G',$$
a contradiction. Now, it remains to exclude the case when $\exp(G') > \max \{\spec(G \setminus G')\}$. In particular, there are an element $x \in G'$ and a $p$-power $t$ such that $|x^t| \in \spec(G \setminus G')$. Set $h = x^t$. Consequently, there exists an element $g \in G \setminus G'$ such that $|h| = |g|$, which completes the proof.
\end{proof}

We obtain a bound of the exponent of certain infinite non-abelian $\FC$-group in terms of the number of automorphism orbits (see Examples \ref{ex.H} and \ref{ex.n(2)}, below).   

\begin{cor} \label{Cor3.3}
Let $G$ be an infinite non-abelian $\FC$-group with finitely many automorphism orbits. Suppose that $G$ is a $p$-group. Then $\exp(G)$ divides $p^{\omega(G) - 2}$.
\end{cor}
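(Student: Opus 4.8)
The plan is to read Corollary \ref{Cor3.3} as a pigeonhole count on orders, with the single nontrivial input supplied by Lemma \ref{Lemma 4.2}. Since $G$ is a $p$-group with finitely many automorphism orbits, its exponent is finite; write $\exp(G) = p^{k}$. Because $G$ contains an element of order $p^{k}$, the cyclic subgroup it generates already exhibits elements of every order $p^{0}, p^{1}, \ldots, p^{k}$, so $\spec(G) = \{1, p, \ldots, p^{k}\}$ consists of exactly $k+1$ distinct values.

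The key structural fact is that elements lying in a common automorphism orbit share the same order, so assigning to each orbit the order of its elements defines a map from the set of orbits to $\spec(G)$. Since every one of the $k+1$ orders is realised, this map is surjective, and one immediately recovers the crude bound $\omega(G) \geqslant k+1$, i.e. $\exp(G)$ divides $p^{\omega(G)-1}$. To sharpen this by one factor of $p$ I would exhibit an order that is hit by at least two orbits.

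This is exactly where Lemma \ref{Lemma 4.2} enters: it produces $h \in G'$ and $g \in G \setminus G'$ with $|h| = |g| = p^{j}$ for some $j$; moreover $g \neq 1$ forces $j \geqslant 1$. Since $G'$ is a characteristic subgroup, every automorphism maps $G'$ bijectively onto itself and hence also preserves the complement $G \setminus G'$, so the orbit of $h$ lies entirely inside $G'$ while the orbit of $g$ lies entirely outside $G'$; in particular these two orbits are distinct even though they carry the same order $p^{j}$. Thus the value $p^{j}$ has at least two preimages under the orbit-to-order map, while every other order in $\spec(G)$ retains at least one. Counting gives $\omega(G) \geqslant (k+1) + 1 = k+2$, whence $k \leqslant \omega(G) - 2$ and $\exp(G) = p^{k}$ divides $p^{\omega(G)-2}$.

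The argument is short because all the difficulty has been absorbed into Lemma \ref{Lemma 4.2}; the only points that require care here are the verification that $\spec(G)$ omits no intermediate $p$-power (so that the orbit-to-order map is onto a set of full size $k+1$) and the observation that the characteristicity of $G'$ keeps the orbits of $h$ and $g$ apart. No further case analysis is needed.
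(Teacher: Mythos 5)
Your proof is correct and takes essentially the same route as the paper: both arguments rest on the finiteness of $\exp(G)$, the fact that automorphism orbits determine element orders (so the $k+1$ realized orders force $\omega(G) \geqslant k+1$), and Lemma \ref{Lemma 4.2} supplying two distinct orbits of the same order, kept apart because $G'$ is characteristic. The paper merely packages this count as a contradiction with the assumption $\exp(G) \geqslant p^{\omega(G)-1}$, listing the orbits of the powers $g^{p^{i}}$ explicitly, whereas you count directly; the content is identical.
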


\begin{proof}
Suppose that $\exp(G) \geq p^{\omega(G) - 1}$. Set $g \in G$ such that $|g| = p^{\omega(G) - 1}$. We can deduce that 
\begin{equation} \label{eq:cor.p}
G = \{1\} \uplus g^{\Aut(G)} \uplus (g^{p})^{\Aut(G)} \uplus ... \uplus (g^{p^{\omega(G) - 2}})^{\Aut(G)}.
\end{equation}
By Theorem A, the derived subgroup $G'$ is finite. By Lemma \ref{Lemma 4.2}, there exists an element $h$ of $G \setminus G'$ such that $|h| \in \spec(G')$. In particular, we present two elements of the same order which are in different automorphism orbits, contrary to (\ref{eq:cor.p}). The result follows. 
\end{proof}

\begin{lem} \label{lem.2-groups}
Let $G$ be an infinite non-abelian $\FC$-group. Assume that $G$ is a $2$-group. Then $\omega(G) \geqslant 4$.  
\end{lem}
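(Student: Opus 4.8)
The plan is to show that the only obstruction to having few orbits is the exponent bound already established in Corollary \ref{Cor3.3}, and then to rule out the two smallest values of $\omega(G)$ by hand. Since $G$ is nontrivial we automatically have $\omega(G)\geqslant 2$, and the cases $\omega(G)=2$ and $\omega(G)=3$ are the only ones that need to be excluded.

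First I would dispose of $\omega(G)=2$. By Lemma \ref{lem.abelian}, any infinite $\FC$-group with $\omega(G)=2$ is the additive group of a vector space, and in particular abelian. Since we are assuming $G$ is non-abelian, this forces $\omega(G)\geqslant 3$. This step is immediate and requires no new argument.

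The crux is to rule out $\omega(G)=3$. Here I would argue by contradiction and feed this value into Corollary \ref{Cor3.3}: as $G$ is an infinite non-abelian $\FC$-group that is a $2$-group (so $p=2$), the corollary yields that $\exp(G)$ divides $2^{\omega(G)-2}=2^{3-2}=2$. Thus $\exp(G)\leqslant 2$, and since $G$ is a nontrivial $2$-group we get $\exp(G)=2$. But a group of exponent $2$ satisfies $xy=(xy)^{-1}=y^{-1}x^{-1}=yx$ for all $x,y$, so $G$ would be abelian, contradicting the hypothesis. Hence $\omega(G)=3$ is impossible and we conclude $\omega(G)\geqslant 4$.

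I do not expect a genuine obstacle in this proof: the entire content is packaged in Corollary \ref{Cor3.3}, so the only thing to verify carefully is that its hypotheses (infinite, non-abelian, $\FC$, finitely many orbits, $p$-group) hold under the assumptions of the lemma, which they do verbatim. The delicate point worth double-checking is simply the arithmetic $p^{\omega(G)-2}=2^1$ when $\omega(G)=3$, together with the elementary fact that exponent $2$ implies commutativity; both are routine. In short, the lemma is a clean corollary of the exponent bound combined with the abelianity of exponent-$2$ groups.
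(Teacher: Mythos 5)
Your proof is correct, and it rests on the same two pillars as the paper's: Lemma \ref{lem.abelian} to dispose of $\omega(G)=2$, and Corollary \ref{Cor3.3} to kill $\omega(G)=3$. The one difference is worth noting: you apply Corollary \ref{Cor3.3} at full strength, reading off $\exp(G)\mid 2^{\omega(G)-2}=2$ and finishing immediately with the fact that exponent~$2$ forces commutativity. The paper instead quotes the weaker bound ``$\exp(G)$ divides $4$'' (apparently a slip, since $p^{\omega(G)-2}=2^{1}=2$ here) and then has to make a second appeal to Lemma \ref{Lemma 4.2} to produce elements $h\in G'$ and $g\in G\setminus G'$ of equal order lying in the two distinct nontrivial orbits, from which it deduces $\exp(G)=2$; moreover the claim there that $|h|=|g|=2$ is asserted rather than argued (one must still rule out the common order being $4$). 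Your version shows this detour is unnecessary: once Corollary \ref{Cor3.3} is invoked with the correct exponent of $p$, the lemma follows in one line, so your proof is both valid and a genuine simplification of the paper's.
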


\begin{proof}
If $\omega(G)=2$, then $G$ is abelian (by Lemma \ref{lem.abelian}). There is no loss of generality in assuming that $\omega(G)=3$. By Corollary \ref{Cor3.3}, $\exp(G)$ divides $4$. Since $G$ is non-abelian, we conclude that $G'$ is non-trivial, so there are elements $h \in G'$ and $g \in G \setminus G'$ such that $|h|=|g|=2$ (by Lemma \ref{Lemma 4.2}). As $\omega(G)=3$, we have $\exp(G)=2$, which is impossible. The proof is complete.      
\end{proof}

The bound on the number of orbits in the above result cannot be improved (see Example \ref{ex.n(2)}, below).

\begin{lem} \label{lem.3.4}
Let $G$ be an infinite non-abelian $\FC$-group. 
\begin{itemize}
\item[(a)] If $\omega(G) = 3$, then $G$ is nilpotent of class $2$ and its exponent $\exp(G)$ is $p$, for some odd prime $p$;
\item[(b)] If $\omega(G) = 4$, then $G$ is nilpotent of class $2$ and the exponent $\exp(G)$ divides $p^2$, for some prime $p$.   
\end{itemize}
\end{lem}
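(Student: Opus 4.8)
The plan is to reduce $G$ to a finite‑exponent $p$‑group, read off the exponent bound from Corollary~\ref{Cor3.3}, and then extract the nilpotency class from the orbit count. I will use freely that $G'$ is finite (Theorem~A), that $\cent(G)\neq1$ (Lemma~\ref{lem.center}), that $\omega(G)=2$ would force $G$ abelian (Lemma~\ref{lem.abelian}), and that every characteristic subgroup is a union of automorphism orbits whose elements share a common order.

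\emph{First I would show that $\cent(G)$ is a $p$-group.} That $G$ is periodic of finite exponent follows from Corollary~\ref{cor.odd} in case (a) (since $3$ is odd) and from Corollary~\ref{Cor.na.5} in case (b) (since $\omega(G)\le5$). Now $\cent(G)$ is a nontrivial periodic abelian group. If it had two nontrivial primary components $P_1,P_2$, these would be characteristic in $G$ with $P_1\cap P_2=1$, so Lemma~\ref{lemma2} gives $\omega(G)\ge\omega(P_1)\omega(P_2)\ge4$. In case (a) this already contradicts $\omega(G)=3$. In case (b) one gets more: $\cent(G)$ then contains elements of the three distinct orders $p_1,p_2,p_1p_2$, so at least three nontrivial orbits lie inside the characteristic subgroup $\cent(G)$ while at least one further orbit lies outside it, forcing $\omega(G)\ge5$. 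Hence $\cent(G)$ is a $p$-group.

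\emph{Next I would prove that $G$ itself is a $p$-group.} Suppose not, and pick a central $z$ of order $p$ together with an element $g$ of order $q\neq p$; since $z$ is central, $zg$ has order $pq$, so $p,q,pq\in\spec(G)$. In case (a) these are three distinct nontrivial orders, impossible for $\omega(G)=3$, so $G$ is a $p$-group. In case (b) they must exhaust the nontrivial spectrum, giving $\spec(G)=\{1,p,q,pq\}$, exactly one orbit of each nontrivial order, and $\cent(G)$ elementary abelian equal to $1$ together with the unique orbit of order‑$p$ elements; thus every $p$-element is central. The $q$-elements then form a characteristic subgroup $Q$: for two of them the finite subgroup $F$ they generate has all its $p$-elements central, so $F/\cent(F)$ has exponent dividing $q$, whence $F$ is nilpotent and $F=F_p\times F_q$, placing the product in the $q$-group $F_q$. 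Consequently $G=\cent(G)\times Q$, and the product formula in Lemma~\ref{lemma2} yields $4=\omega(\cent(G))\,\omega(Q)=2\,\omega(Q)$, so $\omega(Q)=2$, making $Q$ (elementary) abelian and $G$ abelian, a contradiction. Hence $G$ is a $p$-group, and Corollary~\ref{Cor3.3} gives $\exp(G)\mid p^{\omega(G)-2}$, i.e. $p$ in case (a) and $p^2$ in case (b); in case (a) Lemma~\ref{lem.2-groups} excludes $p=2$, so $p$ is odd.

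\emph{Finally I would determine the class.} First $G$ is nilpotent: from $\gamma_2=G'$ on, the lower central terms are finite, so the series stabilizes at a finite characteristic $L=\gamma_k$ with $[L,G]=L$; but $G/\centralizer(L)$ is a finite $p$-group acting on the finite $p$-group $L$, which forces $[L,G]\subsetneq L$ unless $L=1$, so $L=1$. The successive quotients of the upper central series are then nonempty unions of orbits partitioning $G\setminus\{1\}$, so the class is at most the number of nontrivial orbits: at most $2$ in case (a), hence exactly $2$, and at most $3$ in case (b). To exclude class $3$ in case (b), each upper‑central gap is then a single orbit, so $1\neq\gamma_3\le\cent(G)$ forces $\gamma_3=\cent(G)$; combining $G'\le\cent_2(G)$ with $G'\not\le\cent(G)$ pins down $G'=\cent_2(G)$, which is impossible since $G'$ is finite while $\cent_2(G)$ has finite index (Hall) in the infinite group $G$. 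Thus $G$ has class $2$. The step I expect to be the main obstacle is ruling out a second prime in case (b): unlike case (a) the orders $p,q,pq$ are not individually forbidden, and the contradiction only materializes after showing that the $q$-elements form a subgroup—which rests on the delicate local fact that the relevant finitely generated subgroups are nilpotent—and then applying the direct‑product formula in Lemma~\ref{lemma2}.
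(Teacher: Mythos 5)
Your proposal is correct, but it runs in the opposite direction to the paper's proof and uses genuinely different tools. The paper establishes nilpotency of class at most $2$ \emph{first}, by direct orbit counting: in (a), both $G'$ and $\cent(G)$ are nontrivial proper characteristic subgroups, hence each is $\{1\}$ plus a single orbit, and since a group is never the union of two proper subgroups, $G'=\cent(G)$; in (b), it splits on $\omega(G')\in\{2,3\}$, showing either $\cent_2(G)=G$ (if $\omega(G')=3$, because otherwise a product $hg$ with $h\in G'\setminus\cent_2(G)$, $g\in\cent_2(G)\setminus G'$ would create a fifth orbit) or $G'\leqslant\cent(G)$ (if $\omega(G')=2$, because $G'\cap\cent(G)=1$ would force $G=G'\cent(G)$ abelian). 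Only afterwards does it deduce that $G$ is a $p$-group, citing the primary decomposition of torsion nilpotent groups \cite[5.2.22]{Rob}. You invert this: you prove the $p$-group property first by spectrum arguments, and in case (b) this costs you the one substantial piece of work the paper's ordering avoids, namely showing that the $q$-elements form a characteristic subgroup $Q$ (via local finiteness and nilpotency of the finite subgroups involved), so that $G=\cent(G)\times Q$ and the product formula in Lemma \ref{lemma2} forces $\omega(Q)=2$, hence $G$ abelian, a contradiction. You then must prove nilpotency from scratch (the stabilized lower central term $L$ with $[L,G]=L$ killed by the finite $p$-group action argument --- the paper gets this for free once class $\leqslant 2$ is known) and bound the class by the nice general principle that the upper central gaps are disjoint nonempty unions of orbits, excluding class $3$ in (b) by the clash between $G'$ finite and $\cent_2(G)$ of finite index. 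Both arguments are sound; yours yields two reusable general facts (nilpotency class at most $\omega(G)-1$, and the primary reduction via spectra), while the paper's is considerably shorter precisely because securing class $\leqslant 2$ early turns the $p$-group reduction into a one-line citation and makes your $Q$-subgroup construction --- correctly flagged by you as the delicate step --- unnecessary.
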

\begin{proof}
$(a).$ By Theorem A, $G'$ is finite. By Lemma \ref{lem.center}, the center $\cent(G)$ is nontrivial. Consequently, $G' = \cent(G)$ and $\omega(\cent(G))=2$, so $\cent(G)$ is a finite elementary abelian $p$-group, for some prime $p$. Since $G$ is nilpotent of class at most $2$ and $\cent(G)$ has exponent $p$, it follows that $G$ is a $p$-group (cf. \cite[5.2.22]{Rob}). Combining Corollary \ref{Cor3.3} and Lemma \ref{lem.2-groups} we deduce that the exponent $\exp(G) = p$ for some odd prime $p$. \\

\noindent $(b).$ We first prove that $G$ is nilpotent of class $2$.

By Theorem A, $G'$ is finite. We have $1 < \omega(G') < 4$. Suppose that $\omega(G') = 3$. Since $\cent_{2}(G)$ is an infinite subgroup, we deduce that there is an element $g \in \cent_{2}(G) \setminus G'$. Thus $G = G' \cent_{2}(G)$ because that characteristic subset meets each orbit under $\Aut(G)$. Moreover, if $\cent_{2}(G) < G$, then there exists $h \in G' \setminus \cent_{2}(G)$. Hence $(hg)^{\Aut(G)}$ does not belong to $G'$ and is not in $\cent_{2}(G)$; which is impossible, because $\omega(G) = 4$. Thus $\cent_2(G) = G$ if $\omega(G')=3$. Now we can assume that $\omega(G') = 2$. Consequently, the derived subgroup $G'$ is a finite elementary abelian $p$-group, for some prime $p$. If $G' \cap \cent(G) = \{ 1 \}$, then $G = G' \cent(G)$ and $G$ is abelian, a contradiction. So $G' \cap \cent(G)$ meets the unique nontrivial orbit of $\Aut(G)$ in $G'$, and $G' \leqslant \cent(G)$ follows. 

Since $2 \leq \omega(\cent(G)) \leq 3$, we conclude that $\cent(G)$ is a $p$-group. It follows that $G$ is a $p$-group (\cite[5.2.22]{Rob}). By Corollary \ref{Cor3.3}, $\exp(G) \leq p^{2}$. 
\end{proof}

The following lemma is a straightforward consequence of \cite[2.1]{BDG}.   

\begin{lem} \label{lem.Z}
Let $G$ be a finite non-soluble group with $\omega(G) \leqslant 6$. Then the center $\cent(G)$ is trivial.  
\end{lem}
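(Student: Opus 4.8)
The plan is to deduce the result from the classification of finite non-soluble groups with few automorphism orbits supplied by \cite[2.1]{BDG}. That result pins $G$ down to a short explicit list: up to isomorphism $G$ is one of $\PSL(2,\mathbb{F}_q)$ with $q\in\{4,7,8,9\}$, or $\PSL(3,\mathbb{F}_4)$, or $\ASL(2,\mathbb{F}_4)$. Once the candidate groups are on the table the statement $\cent(G)=1$ becomes a finite, case-by-case verification, so the whole argument reduces to computing one center per group. This is exactly why the lemma is only a \emph{straightforward} consequence of \cite[2.1]{BDG}: the hard work (controlling the number of orbits) has already been done, and nothing about $\omega(G)$ beyond the bound $\omega(G)\leqslant 6$ is needed here.

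For the projective special linear groups the verification is immediate, since each $\PSL(2,\mathbb{F}_q)$ with $q\in\{4,7,8,9\}$ and also $\PSL(3,\mathbb{F}_4)$ is a finite non-abelian simple group, hence has no nontrivial proper normal subgroup and in particular trivial center. The one case calling for a genuine computation is $G\cong\ASL(2,\mathbb{F}_4)=\SL(2,\mathbb{F}_4)\ltimes\mathbb{F}_4^2$. Writing a general element as a pair $(A,v)$ with $A\in\SL(2,\mathbb{F}_4)$ and $v\in\mathbb{F}_4^2$, acting affinely by $x\mapsto Ax+v$ so that $(A,v)(B,w)=(AB,Aw+v)$, I would impose that $(A,v)$ commute with every $(B,w)$. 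Comparing first coordinates forces $AB=BA$ for all $B$, i.e. $A\in\cent(\SL(2,\mathbb{F}_4))$; but $\SL(2,\mathbb{F}_4)\cong\Alt_5$ has trivial center, so $A=I$. Feeding this back into the second coordinate forces $v=Bv$ for every $B$, and since the natural action of $\SL(2,\mathbb{F}_4)$ on $\mathbb{F}_4^2$ fixes no nonzero vector, we get $v=0$. Hence $\cent(\ASL(2,\mathbb{F}_4))=1$, which closes the last case.

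The only real obstacle is bookkeeping rather than mathematics: one must quote \cite[2.1]{BDG} with precisely the hypotheses it requires (finiteness together with non-solubility) so that the classification genuinely applies to $G$, and one must be sure the list is exhaustive under $\omega(G)\leqslant 6$. After that, simplicity disposes of every group except the affine one, and the semidirect-product computation sketched above settles $\ASL(2,\mathbb{F}_4)$; no estimate on orbits or delicate group-theoretic argument is needed beyond what is already imported. (If instead \cite[2.1]{BDG} is stated as a structural assertion bounding the soluble radical or Fitting subgroup, the conclusion is even quicker, since $\cent(G)$ is contained in both, and its triviality is then immediate.)
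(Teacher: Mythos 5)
Your proof is correct, but it takes a genuinely different route from the paper. The paper gives no case analysis at all: it declares the lemma a straightforward consequence of the \emph{preliminary} result \cite[2.1]{BDG}, i.e.\ the triviality of the center is obtained upstream, before (and as an ingredient of) any classification. You instead invoke the terminal classification theorem of that same paper (\cite[Theorem 1]{BDG}, restated here as Theorem~\ref{thm.nsoluble}) and then verify centers group by group: the groups $\PSL(2,\mathbb{F}_q)$, $q \in \{4,7,8,9\}$, and $\PSL(3,\mathbb{F}_4)$ are non-abelian simple, and your computation for $\ASL(2,\mathbb{F}_4) = \SL(2,\mathbb{F}_4) \ltimes \mathbb{F}_4^2$ is right: commuting in the first coordinate forces $A \in \cent(\SL(2,\mathbb{F}_4))$, which is trivial since $\SL(2,\mathbb{F}_4) \cong \Alt_5$, and then the second coordinate forces $v$ to be fixed by the whole of $\SL(2,\mathbb{F}_4)$ acting on the natural module, hence $v=0$. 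This is logically sound within the present paper: both \cite[2.1]{BDG} and \cite[Theorem 1]{BDG} are established external results, so no circularity arises (your list, which includes $\PSL(2,\mathbb{F}_8)$, is in fact the correct exhaustive one; the statement of Theorem~\ref{thm.nsoluble} in this paper omits $\PSL(2,\mathbb{F}_8)$, evidently a typo given the introduction). The trade-off is clear: your route is concrete and checkable by hand, but it leans on the strongest available result, whereas the paper's citation is lighter and respects the logical architecture of \cite{BDG}, where the center fact is a stepping stone toward, not a consequence of, the classification; unfolded into the source, your argument runs the long way around (preliminary lemma $\rightarrow$ classification $\rightarrow$ center triviality).
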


We are now in a position to prove Theorem B. 

\begin{proof}[Proof of Theorem B]
$(a)$ By Lemmas \ref{lem.abelian} and \ref{lem.3.4}, $G$ is nilpotent. \\

\noindent $(b)$ Assume that $G$ is an infinite $\FC$-group with $\omega(G) \leqslant 7$. We claim that $G$ is soluble.  

Assume that $G$ is non-soluble. Then, $G'$ is also non-soluble. By Theorem A, $G'$ is finite. By  \cite[Theorem 1]{LD}, $\omega(G') \geqslant 4$. Since $G'$ is a proper characteristic subgroup, we deduce that $\omega(G') \leqslant 6$ under the action of $\Aut(G)$ and so, $G' \cap \cent(G) = 1$ (Lemma \ref{lem.Z}). By Lemma \ref{lem.center}, the center $\cent(G)$ is nontrivial. It follows that $\omega(G) \geqslant \omega(G' \cent(G)) \geqslant 8$ (Lemma \ref{lemma2}), which contradicts $\omega(G)\leqslant 7$. \\ 

\noindent $(c)$ Now assume that $G$ is an infinite non-soluble $\FC$-group with $\omega(G)=8$. Then $G'$ is not soluble, and $\cent(G)$ is not trivial (by Lemma \ref{lem.center}). Suppose that $G' \cap \cent(G) \neq 1$. It follows that $G/(G' \cap \cent(G))$ is a $\BFC$-group with $\omega(G/(G' \cap \cent(G))) \leqslant 7$. We reach the contradiction that $G$ is soluble. So $G' \cap \cent(G)$ is trivial, and the characteristic subgroup $G'\cent(G)$ of $G$ is a direct product $G' \times \cent(G)$. Now $\omega(G') \geqslant 4$ (cf. \cite[Theorem 1]{LD}) yields $8 \leqslant \omega(G' \times \cent(G)) = \omega(G') \cdot \omega(\cent(G)) \leqslant \omega(G)$ (by Lemma \ref{lemma2}). Thus $\omega(G)=8$ enforces $G = G' \times \cent(G)$ and $\omega(\cent(G))=2$. Then $\omega(G')=4$ gives $G' \cong \Alt_5$ (cf. \cite[Theorem 1]{LD}), completing the proof.      
\end{proof}

\begin{ex} \label{ex.5}
In a certain sense the above result cannot be improved. \\

\noindent {\bf (i)} The infinite $\FC$-group $\Sym_{3} \times H_{2}$, where $H_{2}$ is an infinite elementary abelian $2$-group, has five orbits under the action of $\Aut(G)$. For each $h \in H_2$, there exists a homomorphism $\varphi_h$ from $\Sym_3$ to $H_2$ mapping $(12)$ to $h$. Putting $\alpha_h(sv):=s(v\varphi_h(s))$ for $s \in \Sym_3$ and $v \in H_2$ defines an automorphism of $\Sym_{3} \times H_2$ mapping $(12)$ to $(12)h$. Therefore the elements $(1 \, 2)$, $(1 \, 2 \, 3)$, $h$, $(1 \, 2 \, 3)h$ are the representatives of nontrivial orbits of $G$. \\

\noindent {\bf (ii)} The infinite non-soluble $\FC$-group $\Alt_5 \times \mathbb{Q}$ has exactly eight automorphism orbits (by Lemma \ref{lemma2}).

\end{ex}

In \cite{S1} Stroppel proposed the following problem (Problem 2.5): classify all the finite non-solvable groups $G$ with $\omega(G) \leq 6$. We answered Stroppel's question by the following result: 

\begin{thm} (\cite[Theorem 1]{BDG})  \label{thm.nsoluble}
Let $G$ be a finite non-soluble group with $\omega(G) \leqslant 6$. Then $G$ is isomorphic to one of $\PSL(2,\mathbb{F}_4)$, $\PSL(2,\mathbb{F}_7)$, $\PSL(2,\mathbb{F}_9)$, $\PSL(3,\mathbb{F}_4)$, $\ASL(2,\mathbb{F}_4)$.  
\end{thm}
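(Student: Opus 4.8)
The plan is to argue according to the structure of the socle, reducing the general non-soluble case to a classification of (almost) simple groups with few orbits, where the genuine difficulty lies. Throughout we may assume $5\leqslant\omega(G)\leqslant 6$, since $\omega(G)\leqslant 4$ already forces $G\cong\PSL(2,\mathbb{F}_4)$ by \cite[Theorem 1]{LD}, and by Lemma \ref{lem.Z} we have $\cent(G)=1$. I would write the socle as $\mathrm{Soc}(G)=A\times L$, where $A$ is the product of the abelian minimal normal subgroups and $L$ the product of the nonabelian ones; both are characteristic. If both were nontrivial, then $L$ is non-soluble, so $\omega(L)\geqslant 4$ by \cite[Theorem 1]{LD}, while $\omega(A)\geqslant 2$; Lemma \ref{lemma2} would then give $\omega(G)\geqslant 8$, a contradiction. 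Hence exactly one factor survives, which separates the argument into an \emph{almost simple} case ($A=1$) and an \emph{affine} case ($L=1$).

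First I would treat the almost simple case, where $\mathrm{Soc}(G)=T_1\times\cdots\times T_k$ is a product of copies of a nonabelian simple group $T$. Since $\mathrm{Soc}(G)$ is characteristic, the natural map $\Aut(G)\to\Aut(\mathrm{Soc}(G))$ shows that $\mathrm{Soc}(G)$ splits into at least $\omega(\mathrm{Soc}(G))$ automorphism orbits inside $G$, so $\omega(\mathrm{Soc}(G))\leqslant 6$. But $\Aut(T^k)=\Aut(T)\wr\Sym_k$, and already for $T\cong\Alt_5$, whose four orbit types I denote $\{1,c_2,c_3,c_5\}$, the wreath action produces $\binom{4}{2}+4=10$ orbit types on $T_1\times T_2$ when $k\geqslant 2$; thus $k=1$ and $\mathrm{Soc}(G)=T$. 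Now $\centralizer(T)=1$ (any nontrivial normal subgroup centralizing $T$ would contain a minimal normal subgroup lying in $\mathrm{Soc}(G)=T$ yet meeting $T$ trivially), so $T\cong\mathrm{Inn}(T)\leqslant G\leqslant\Aut(T)$, and the same restriction argument gives $\omega(T)\leqslant\omega(G)\leqslant 6$. The decisive step is then the classification of the finite nonabelian simple groups $T$ with $\omega(T)\leqslant 6$: Stroppel already settled $\omega(T)\leqslant 5$ in \cite{S1}, and I would extend his analysis to $\omega(T)=6$, the only new candidate being $\PSL(3,\mathbb{F}_4)$. Finally, for each admissible $T$ one rules out the proper extensions $T<G\leqslant\Aut(T)$ by the same count of orbits (for example $\omega(\Sym_5)=7$), so that $G=T$ is simple.

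For the affine case, $V:=\mathrm{Soc}(G)$ is a characteristic elementary abelian $p$-group (a further application of Lemma \ref{lemma2} shows it is homogeneous of a single prime), and since $\cent(G)=1$ while every minimal normal subgroup is abelian, the quotient $Q:=G/\centralizer(V)$ is a non-soluble subgroup of $\GL(V)$. The bound $\omega(G)\leqslant 6$ forces the nonzero vectors of $V$ to form essentially a single $\Aut(G)$-orbit, so $Q$ is a non-soluble transitive linear group; since $\Alt_5\cong\SL(2,\mathbb{F}_4)$ is the smallest non-soluble group and acts transitively on $\mathbb{F}_4^2\setminus\{0\}$, a short analysis of the remaining orbits (those meeting $G\setminus V$) singles out $p=2$, $Q\cong\SL(2,\mathbb{F}_4)$ and $V=\mathbb{F}_4^2$ the natural module, any larger or non-natural module fragmenting $V\setminus\{1\}$ into too many orbits. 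Hence $G\cong\ASL(2,\mathbb{F}_4)$, for which a direct computation gives $\omega(G)=6$.

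I expect the simple-group step to be the main obstacle, as it is genuinely dependent on the classification of finite simple groups. One uses that $\omega(T)$ bounds below both the number of distinct element orders of $T$ and the quantity $k(T)/|\mathrm{Out}(T)|$, where $k(T)$ is the class number, and that $|\mathrm{Out}(T)|$ is controlled by the Schreier property; then $\omega(T)\leqslant 6$ confines $k(T)$ and the spectrum of $T$ so severely that only finitely many $T$ survive, after which the known conjugacy-class and outer-automorphism data isolate the list. The careful bookkeeping of classes fused under outer automorphisms is precisely what lifts Stroppel's $\omega\leqslant 5$ theorem to $\omega=6$; the socle reduction and the affine analysis, by contrast, are elementary consequences of Lemma \ref{lemma2}, Lemma \ref{lem.Z}, and \cite[Theorem 1]{LD}.
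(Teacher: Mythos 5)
First, a structural observation: the paper you were given does not prove this statement at all. It is imported verbatim as \cite[Theorem 1]{BDG} (the authors' earlier work), so there is no internal proof to compare your attempt against; your sketch has to be judged as a would-be proof of that external result. As such, your skeleton is sound and matches the natural strategy: the socle dichotomy via Lemma \ref{lemma2}, the count $\binom{4}{2}+4=10$ showing the socle cannot contain two non-abelian simple factors, $\centralizer(\mathrm{Soc}(G))=1$ in the almost simple case, and the inequality $\omega(T)\leqslant\omega(G)$ are all correct (and using Lemma \ref{lem.Z} is not circular, since in \cite{BDG} the corresponding lemma precedes their Theorem 1). The genuine gap is that the decisive step is asserted rather than proved: the classification of non-abelian simple $T$ with $\omega(T)\leqslant 6$, and in particular the claim that $\PSL(3,\mathbb{F}_4)$ is the only group arising at $\omega(T)=6$, is exactly the CFSG-dependent heart of the theorem. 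Your final paragraph describes only the shape of an argument (bounds on $k(T)/|\mathrm{Out}(T)|$ and on the spectrum); ``extending Stroppel's analysis'' is a promise, not a proof. A correct write-up should instead invoke Kohl's classification \cite{K02,K04}, which is what makes this step available.

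Two further steps are also left as gestures. In the affine case you never prove $\centralizer(V)=V$, and the claim that $\omega(G)\leqslant 6$ forces $V\setminus\{1\}$ to be a single orbit is unjustified: one must first bound the number of orbits lying outside $V$ (using non-solubility of $Q$), since a priori $V\setminus\{1\}$ could split into two orbits within the available budget; and even granting transitivity of $Q$, singling out $(\SL(2,\mathbb{F}_4),\mathbb{F}_4^2)$ requires Hering's classification of transitive linear groups (which also allows $\mathrm{Sp}$-type groups, $G_2$-type groups, $\Alt_6$, $\Alt_7$, $\SL(2,\mathbb{F}_{13})$, etc.) together with an orbit count in each case. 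Likewise, eliminating the proper almost simple groups $T<G\leqslant\Aut(T)$ is a real case check ($\Sym_6$, $\PGL(2,9)$, $M_{10}$, $\mathrm{P\Gamma L}(2,9)$, $\PGL(2,7)$, $\mathrm{P\Gamma L}(2,8)$, and the many extensions of $\PSL(3,\mathbb{F}_4)$), of which you mention only $\omega(\Sym_5)=7$. Finally, your route exposes a defect in the statement itself: Stroppel's list for $\omega(T)\leqslant 5$, which you correctly cite, contains $\PSL(2,\mathbb{F}_8)$ (its three classes of order-$7$ elements, and its three classes of order-$9$ elements, are each fused by the field automorphism, so $\omega(\PSL(2,\mathbb{F}_8))=5$). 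Hence $\PSL(2,\mathbb{F}_8)$ satisfies the hypotheses but is missing from the list as printed in Theorem \ref{thm.nsoluble}; the paper's own introduction and \cite[Theorem 1]{BDG} include $q=8$, so a complete proof along your lines would (correctly) yield the six-group list, not the five-group one stated here.
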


Now, we extend Theorem \ref{thm.nsoluble} to the class of infinite $\FC$-groups.

\begin{cor}
Let $G$ be a non-soluble $\FC$-group with $\omega(G) \leqslant 6$. Then $G$ is finite. Moreover, $G$ is isomorphic to one of $\PSL(2,\mathbb{F}_4)$, $\PSL(2,\mathbb{F}_7)$, $\PSL(2,\mathbb{F}_9)$, $\PSL(3,\mathbb{F}_4)$, $\ASL(2,\mathbb{F}_4)$.
\end{cor}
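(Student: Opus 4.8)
The plan is to dispose of the infinite case first using the solubility criterion already established, and then to quote the finite classification verbatim. The statement asserts two things: that $G$ is forced to be finite, and that up to isomorphism it lies on a short explicit list. The second assertion is exactly Theorem~\ref{thm.nsoluble} once finiteness is known, so the entire substantive content of the corollary is the finiteness claim itself.

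To prove finiteness I would argue by contradiction. Suppose $G$ were infinite. Then $G$ is an infinite $\FC$-group, and since $\omega(G) \leqslant 6 \leqslant 7$, Theorem~B$(b)$ (Theorem~\ref{thm.soluble}) forces $G$ to be soluble. This contradicts the hypothesis that $G$ is non-soluble. Hence $G$ must be finite.

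With $G$ now known to be a finite non-soluble group satisfying $\omega(G) \leqslant 6$, Theorem~\ref{thm.nsoluble} applies directly and yields that $G$ is isomorphic to one of $\PSL(2,\mathbb{F}_4)$, $\PSL(2,\mathbb{F}_7)$, $\PSL(2,\mathbb{F}_9)$, $\PSL(3,\mathbb{F}_4)$, or $\ASL(2,\mathbb{F}_4)$, completing the argument. There is no genuine obstacle here: the whole corollary is a formal combination of the infinite solubility criterion of Theorem~B with the finite classification of Theorem~\ref{thm.nsoluble}. The only point requiring any care is to check that the bound $\omega(G) \leqslant 6$ falls within the range $\omega(G) \leqslant 7$ covered by Theorem~B$(b)$, so that the reduction to the finite case is legitimate; everything else is an immediate citation.
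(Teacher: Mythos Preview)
Your proof is correct and follows essentially the same approach as the paper: both use Theorem~B to rule out the infinite case (the paper simply says ``by Theorem~B, all infinite $\FC$-groups with at most six automorphism orbits are soluble''), and then invoke Theorem~\ref{thm.nsoluble} for the finite classification. There is no substantive difference.
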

\begin{proof}
By Theorem B, all infinite $\FC$-group with at most six automorphism orbits are soluble. Since $G$ is non-soluble, it follows that $G$ is finite. Consequently, $G$ is isomorphic to one of the groups indicated in Theorem \ref{thm.nsoluble}. The proof is complete.     
\end{proof}

\section{Proof of Theorem C}
Recall that a finite group $K$ is said to be quasisimple if $K = K'$ and $K/\cent(K)$ is simple. 

\begin{thmC}
Let $G$ be an infinite non-soluble $\FC$-group with at most eleven automorphism orbits. Then $G$ is central-by-finite.  
\end{thmC}

\begin{proof}
Since $G$ is non-soluble, it follows that the derived subgroup $G'$ is non-soluble. By Theorem A, the derived subgroup $G'$ is finite. Thus $\omega(G') \geqslant 4$ (cf. \cite[Theorem 1]{LD}). 

Consider $\omega(G') \in \{4,5,6\}$. It follows that $G' \cap \cent_{2}(G) = 1$ (Theorem \ref{thm.nsoluble}). By Hall's result \cite[14.5.3]{Rob}, the second center has finite index in $G$. If $G$ is not central-by-finite, then $\cent(G)$ is a proper subgroup of $\cent_2(G)$ and  $\omega(\cent_{2}(G)) \geq 3$. Hence, $$\omega(G) \geq \omega(G')\omega(\cent_{2}(G)) \geq 12,$$ 
a contradiction.

Assume that $\omega(G') \in \{7,8,9,10\}$. Set $N = G' \cap \cent_{2}(G)$. Since the derived subgroup  $G'$ is non-soluble, it follows that $\omega (G'/N) \geq 4.$ Consider representatives $g_1N$, $g_2N$ and $g_3N$ for three different non-trivial orbits under $\Aut(G)$ in $G'/N$. Since $[G : \cent_{2}(G)]$ is finite, we can choose $z \in \cent_{2}(G) \setminus G'$. If $zg_{i} \in (zg_{j})^{\Aut(G)}$ for some $1 \leq i \neq j \leq 3$, then there exists $\alpha \in \Aut(G)$ such that
$$zg_{i} = z^{\alpha}g_{j}^{\alpha},$$
that is,
$$g_{i}g_{j}^{-\alpha} = z^{-1}z^{\alpha} \in N.$$
So $g_{i}N \in (g_{j}N)^{\Aut(G)}$, contradicting the choice of representatives. Thus we can conclude that the number of nontrivial orbits of $\Aut(G)$ on $G'/N$ is bonded by $4 - 1 = 3$, because $\omega(G') + |\{z, zg_{1}, zg_{2}, zg_{3}\}| = 11$. It remains to exclude the case when $\omega(G) = 11$,  $G'/N \cong \Alt_{5}$, the derived subgroup $G'$ has seven orbits under the action of $\Aut(G)$, and $\cent_{2}(G) \setminus G'$ is just an orbit. If $\cent(G) \setminus G'$ is a non-empty set, then $G = G' \cent(G)$ and $G$ is central-by-finite. Thus $\cent(G)$ is a subgroup of $G'$ and $N = \cent(G) = \cent(G')$. Since $|G'/N| = 60$, we can deduce that $\omega(\cent(G)) = 2$ under the action of $\Aut(G)$, so $\cent(G)$ is a finite elementary abelian $p$-group for some prime $p$. Since the second center $\cent_{2}(G)$ is infinite and $\cent_2(G) \setminus \cent(G)$ contains just an orbit under the action of $\Aut(G)$, it follows that $\omega(\cent_2(G))=3$ is a $p$-group for some odd prime $p$ (by Lemma \ref{lem.3.4}). Note that $G' = [G', G']$ and $G'/\cent(G') = \Alt_{5}$, then $G'$ is a quasisimple group. Hence the center $\cent(G) = \cent(G')$ is isomorphic to a subgroup of the Schur Multiplier $M(\Alt_5) = C_2$ (cf. \cite[Proposition 2.1.7 and Theorem 2.12.5]{Kar}), which is impossible. The proof is complete.  
\end{proof}

Summarizing, we have the following classification of infinite non-soluble $\FC$-groups with at most eleven automorphism orbits. 

\begin{cor}
Let $G$ be an infinite non-soluble $\FC$-group with $\omega(G) \leqslant 11$. Then $G$ is isomorphic to one of the groups in the set 
$$\{\PSL(2, \mathbb{F}_{q}) \times \cent(G), \, \SL(2, \mathbb{F}_5) \times H_2 \mid q \in \{4, 7, 8, 9\} \},$$
where $\omega(\cent(G)) = 2$ and $H_{2}$ is an infinite elementary abelian $2$-group.
\end{cor}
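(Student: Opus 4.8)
The plan is to combine the three established theorems with the finite classification (Theorem \ref{thm.nsoluble}) and the orbit multiplicativity of Lemma \ref{lemma2}. First I would record the structural skeleton. By Theorem C the group $G$ is central-by-finite, so by Schur's theorem $G'$ is finite and $Z:=\cent(G)$ has finite index; as $G$ is infinite, $Z$ is infinite, and as $G$ is non-soluble, $G'$ is a finite non-soluble group, whence $\omega(G')\geqslant 4$ by \cite[Theorem 1]{LD}. Using Theorem A I would write $G=\Tor(G)\times D$ with $D$ a central $\mathbb{Q}$-vector space, so $\omega(G)=\omega(\Tor(G))\,\omega(D)$; this isolates the torsion part $T:=\Tor(G)$, for which $T'=G'$, $\cent(T)=Z\cap T$, and $\bar T:=T/\cent(T)\cong G/Z$ is finite non-soluble.

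The key reduction is to prove that $G=G'Z$, equivalently that $\bar T$ is perfect. Granting this, one has $G'\cap Z=\cent(G')$, and I would split on $N:=G'\cap Z$. If $N=1$, then $G=G'\times Z$ is a characteristic direct product, so $\omega(G)=\omega(G')\,\omega(Z)\geqslant 4\,\omega(Z)$; the bound $\omega(G)\leqslant 11$ together with $\omega(Z)\geqslant 2$ forces $\omega(Z)=2$ and $\omega(G')\in\{4,5\}$. Then $G'$ is a centreless finite non-soluble group with $\omega(G')\in\{4,5\}$, so by \cite[Theorem 1]{LD} and Theorem \ref{thm.nsoluble} we get $G'\cong\PSL(2,\mathbb{F}_q)$ with $q\in\{4,7,8,9\}$; since $\omega(Z)=2$, Lemmas \ref{Stroppel1} and \ref{lem.abelian} show $Z=\cent(G)$ is the additive group of a vector space, giving the family $\PSL(2,\mathbb{F}_q)\times\cent(G)$ with $\omega(\cent(G))=2$.

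If instead $N\neq 1$, then $\cent(G')=N\neq 1$, and here I would run the orbit count from the final paragraph of the proof of Theorem C, now in the (consistent) central-by-finite situation, to force $G'/N\cong\Alt_5$ with $G'$ quasisimple, hence $\cent(G')\hookrightarrow M(\Alt_5)=C_2$; this yields $N=C_2$ and $G'\cong\SL(2,\mathbb{F}_5)$. Since $G'\cong\SL(2,\mathbb{F}_5)$ is a characteristic subgroup with seven orbits, $\omega(T)\geqslant 7$, so $D\neq 1$ would give $\omega(G)=2\,\omega(T)\geqslant 14$; hence $D=1$, $G$ is torsion, and $Z$ is torsion abelian. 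A further finite orbit count, parallel to the computation giving $\omega(\SL(2,\mathbb{F}_5)\times H_2)=11$, then shows that any central element of order $4$ or of odd prime order would create an extra orbit, so $Z$ is an (infinite) elementary abelian $2$-group. Choosing a complement $H_2$ to $N$ in $Z$ gives $G=G'Z=\SL(2,\mathbb{F}_5)\times H_2$ with $H_2$ an infinite elementary abelian $2$-group, and the count confirms $\omega(G)=11$.

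The main obstacle is the reduction $G=G'Z$, i.e.\ the perfection of $G/\cent(G)$: this is precisely where $\omega(G)\leqslant 11$ must be used in earnest. For a non-perfect $\bar G=G/\cent(G)$ the nontrivial abelianization contributes additional automorphism orbits—as in the candidates $\PGL(2,\mathbb{F}_q)\times\cent(G)$, whose orbit counts exceed $11$—and the accounting showing that each such outer coset pushes $\omega(G)$ past $11$ must be carried out with the same care as in the proofs of Theorem B$(c)$ and Theorem C. Once $G=G'Z$ is in hand, the remaining ingredients—multiplicativity (Lemma \ref{lemma2}), the finite classification, and the Schur-multiplier computation $M(\Alt_5)=C_2$—are routine, and assembling the two cases yields exactly the groups $\PSL(2,\mathbb{F}_q)\times\cent(G)$ ($q\in\{4,7,8,9\}$, $\omega(\cent(G))=2$) and $\SL(2,\mathbb{F}_5)\times H_2$.
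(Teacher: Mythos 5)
You take the same route as the paper: Theorem C gives that $G$ is central-by-finite, one then splits into the cases $N := G' \cap \cent(G) = 1$ and $N \neq 1$, settling the first via Theorem \ref{thm.nsoluble} and the second via the Schur-multiplier argument forcing $G' \cong \SL(2,\mathbb{F}_5)$; these are exactly the paper's cases $\omega(G') \in \{4,5,6\}$ and $\omega(G') \in \{7,\dots,10\}$. However, the step you yourself call ``the main obstacle''---that $\omega(G) \leqslant 11$ forces $G = G'\cent(G)$, i.e.\ that $G/\cent(G)$ is perfect---is never proved in your proposal: you only say that the required accounting ``must be carried out with the same care as in the proofs of Theorem B$(c)$ and Theorem C.'' Since everything else is routine once this reduction is granted, what you have written is not a proof but a reduction to the one point that carries all of the difficulty.

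Moreover, that gap cannot be filled, because the claim is false. Let $G = \Sym_5 \times H_2 \cong \PGL(2,\mathbb{F}_5) \times H_2$, where $H_2$ is an infinite elementary abelian $2$-group; this is an infinite non-soluble $\FC$-group. Exactly as in the paper's Example \ref{ex.5}(i), every automorphism of $G$ has the form $(s,v) \mapsto (s^{\alpha}, \varphi(s)v^{\delta})$ with $\alpha \in \Aut(\Sym_5)$ (all of which are inner), $\delta \in \Aut(H_2)$ and $\varphi \in \mathrm{Hom}(\Sym_5, H_2)$; any such $\varphi$ kills $\Alt_5$ but may send the odd permutations to an arbitrary element of $H_2$. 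The orbits are therefore: $\{1\}$; the nontrivial central elements; each of the three nontrivial even classes $(1\,2)(3\,4)$, $(1\,2\,3)$, $(1\,2\,3\,4\,5)$ paired with trivial or nontrivial central part; and each of the three odd classes $(1\,2)$, $(1\,2\,3\,4)$, $(1\,2\,3)(4\,5)$ fused with all of its central translates. That is $2+6+3 = 11$ orbits, so $\omega(G) = 11$, yet $G \neq G'\cent(G)$, and $G$ lies in none of the listed families, since $G/\cent(G) \cong \Sym_5$ while the listed groups have central quotient $\PSL(2,\mathbb{F}_q)$ or $\Alt_5$. In particular your parenthetical assertion that the candidates $\PGL(2,\mathbb{F}_q) \times \cent(G)$ have more than eleven orbits is true when $\cent(G)$ is torsion-free (then both factors are characteristic and Lemma \ref{lemma2} gives, e.g., $\omega(\Sym_5 \times \mathbb{Q}) = 7 \cdot 2 = 14$), but it fails when $\cent(G)$ contains involutions, precisely because of the mixing automorphisms above. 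For what it is worth, the paper's own proof has the same defect: ``arguing as in the proof of Theorem B$(c)$'' pins down $G = G' \times \cent(G)$ only because in Theorem B$(c)$ the orbit budget is exactly $8$, whereas a budget of $11$ leaves room for the three extra orbits that $\Sym_5 \times H_2$ realizes. So your argument breaks at the same place as the paper's, and the remedy is not to complete your proof but to correct the statement (e.g.\ by adding $\PGL(2,\mathbb{F}_5) \times H_2$ to the list of groups).
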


\begin{proof}
Since the derived subgroup $G'$ is non-soluble, it follows that $\omega(G') \in \{4, 5, 6, 7, 8, 9, 10\}$ (under action of $\Aut(G)$). 

Assume that $\omega(G') \in \{4, 5, 6 \}$. Arguing as in the proof of Theorem B (c), we deduce that  $G$ is isomorphic to one of the following groups: $$\PSL(2, \mathbb{F}_{q}) \times \cent(G),$$ where $q \in \{4, 7, 8, 9\}$ and $\omega(\cent(G)) = 2$. 

Now assume that $\omega(G') \in \{7, 8, 9, 10 \}$.  Arguing as in the proof of Theorem C, we can conclude that the Schur multiplier $M(\Alt_5) = C_2$ and $|\cent(G)| \leqslant 2$. In particular,  $G'$ is isomorphic to one of $\SL(2, \mathbb{F}_5)$ or $\Alt_5$, so $G \cong \SL(2,\mathbb{F}_5)\cent(G)$. In particular, $G \cong \SL(2,\mathbb{F}_5) \times H_2$, where $H_2$ is an infinite elementary abelian $2$-group. The result follows.   
\end{proof}

\section{Neumann's groups}

Groups with three automorphism orbits are called almost homogeneous groups. In \cite{MS1}, M\"aurer and Stroppel prove that the nilpotent almost homogeneous groups with exponent $p$ are generalized Heisenberg group (see also Example \ref{ex.H}, below). In the same article, M\"aurer and Stroppel show that a soluble almost homogeneous group with non-trivial center is nilpotent of class $2$. By Lemma \ref{lem.3.4}, all non-abelian infinite $\FC$-groups $G$ with three automorphism orbits are nilpotent and $\exp(G)=p$ for some odd prime $p$. We present some examples of infinite FC-groups now, we determine the number of orbits under their respective automorphism orbits. The following construction of $\BFC$-groups is due to Neumann \cite{N}.

\begin{ex} (Almost homogeneous $p$-groups) \label{ex.H}
Let $p$ a prime and $\N(p)$ be the group generated by 
$$a_{i}, b_{i}, \, i = 1, 2, ..., n, ...$$
with the relations
$$[a_{i}, b_{j}] = [a_{i}, a_{j}] = 1, \, i \neq j,$$
$$[a_{i}, b_{i}] =c, \, c^{p} = [a_{i}, c] = [b_{i}, c] = 1.$$

If $p$ is odd, then $\N(p)$ is isomorphic to the generalized Heisenberg group $\GH(\V, \mathbb{F}_{p}, \beta)$, where $\V$ is a vector space of countably infinite dimension over the field $\mathbb{F}_{p}$ with $p$ elements (see \cite[page 235]{MS1} for more details). Moreover, $\omega(\GH(\V,\mathbb{F}_{p},\beta))=3$. Its center and derived subgroups are $\langle c \rangle$, thus it is nilpotent of class $2$ and each nontrivial normal subgroup of $\N(p)$ contains the subgroup $\langle c \rangle$. Hence $\{\N(p) \mid \, p \text{ odd prime}\}$ is an infinite family of almost homogeneous $\BFC$-group which are not central-by-finite.  
\end{ex}

It is well known that every group of exponent $2$ is abelian. By Lemma \ref{lem.2-groups}, an infinite non-abelian $2$-group and $\BFC$-group has at least four automorphism orbits. We obtain the following related result. 

\begin{ex} \label{ex.n(2)}
The $\BFC$-group $\N(2)$ has exactly four automorphism orbits.
\end{ex}

\begin{proof}
It is clear that $\N(2)$ is hopfian and every surjective endomorphism is an automorphism.

Let $I$ and $J$ be subsets of the set of positive integers, with $|I| = |J|$, and $\sigma: I \rightarrow J$ a bijection. We collect some facts:

\noindent i) The homomorphism $\alpha_{I}: \N(2) \rightarrow \N(2)$ that extends the map
$$a_{i} \mapsto b_{i}, b_{i} \mapsto a_{i}$$
if $i \in I$ and
$$a_{i} \mapsto a_{i}, b_{i} \mapsto b_{i}$$
if $i \in \mathbb{N} \setminus I$, is an automorphism.

\noindent ii) The homomorphism $\beta_{IJ}: \N(2) \rightarrow \N(2)$ that extends the map
$$a_{i} \mapsto a_{i^{\sigma}}, \, b_{i} \mapsto b_{i^{\sigma}}, \, a_{i^{\sigma}} \mapsto a_{i}, \, b_{i^{\sigma}} \mapsto b_{i}$$
if $i \in I$ and
$$a_{i} \mapsto a_{i}, b_{i} \mapsto b_{i}$$
if $i \in \mathbb{N} \setminus I$, is an automorphism.

\noindent iii) Any element $g \in \N(2)$ can be written in the form
$$g = (a_{i_{1}}b_{i_{1}} ... a_{i_{l}}b_{i_{l}})(a_{j_{1}} ... a_{j_{k}})(b_{t_{1}} ... b_{t_{m}})c^{\delta},$$
where $\delta = 0, 1$ and the set $\{i_{1}, ..., i_{l}, j_{1}, ..., j_{k}, t_{1}, ..., t_{m}\}$ has $l + k + m$ elements. If $l$ is even then $g$ has order $2$ and if $l$ is odd then $g$ have order $4$.

Given $g$ of order $2$, consider $I_{1} = \{i_{1}, ..., i_{l}\}$, $I_{2} = \{j_{1}, ..., j_{k}\}$, $I_{3} = \{t_{1}, ..., t_{m}\}$, $J_{1} = \{1, ..., l\}$, $J_{2} = \{l+1, ..., l+k\}$, $J_{2} = \{l+k+1, ..., l+k+m\}$, and $\sigma_{1} : I_{1} \rightarrow J_{1}$, $\sigma_{2} : I_{2} \rightarrow J_{2}$, $\sigma_{3} : I_{3} \rightarrow J_{3}$ the canonical bijection maps from $I_{r}$ to $J_{r}$, with $r = 1, 2, 3$. Hence
$$h = g^{\alpha_{I_{3}} \beta_{I_{1}J_{1}} \beta_{I_{2}J_{2}} \beta_{I_{3}J_{3}}} = (a_{1}b_{1} ... a_{l}b_{l})(a_{l+1} ... a_{l+k})(a_{l+k+1} ... a_{l+k+m})c^{\delta} =$$
$$= (a_{1}b_{1} ... a_{l}b_{l}) a_{l+1}...a_{s}c^{\delta}$$
with $s = l+k+m$. Now, define the sequences
$$x_{1} = a_{1}b_{1} ... a_{l}b_{l}, x_{2} = b_{1}a_{2}, ..., x_{l} = b_{1}a_{l}, x_{l+1} = a_{l+1}, ...,$$
$$y_{1} = b_{1}, y_{2} = b_{1}b_{2}, ..., y_{l} = b_{1}b_{l}, y_{l+1} = b_{l+1}, ...$$
and
$$u_{1} = a_{1}, ..., u_{l} = a_{l}, u_{l+1} = a_{l+1}...a_{s}c^{\delta}, u_{l+2} = a_{l+2}, ...,$$
$$ v_{1} = b_{1}, ..., v_{l} = b_{l}, v_{l+1} = b_{l+1}, v_{l+2} = b_{l+1}b_{l+2}...,$$
$$v_{s} = b_{l+1}b_{s}, v_{s+1} = b_{s+1}, ....$$
Note that $\{x_{i}, y_{i}; i = 1, 2, 3, ...\}$ and $\{u_{i}, v_{i}; i = 1, 2, 3, ...\}$ are generating sets of $G$ and satisfy its  relations. It follows that the homomorphisms $\alpha: \N(2) \rightarrow \N(2)$ and $\beta: \N(2) \rightarrow \N(2)$ that extend, respectively, the maps
$$x_{i} \mapsto a_{i}, \, y_{i} \mapsto b_{i}$$
and
$$u_{i} \mapsto a_{i}, \, v_{i} \mapsto b_{i}$$
are automorphisms. Therefore
$$h^{\alpha \beta} = a_{1}a_{l+1}.$$
Since there exists $\gamma \in \Aut(\N(2))$ such that $(a_{1}a_{l+1})^{\gamma} = a_{1}$, we conclude that $g$ and $a_{1}$ are in the same orbit.

If $l$ is odd, there exists an automorphism $\mu \in \Aut(\N(2))$ such that $$[(a_{2}b_{2} ... a_{l}b_{l}) a_{l+1}...a_{s}c^{\delta}]^{\mu} = a_{2}$$ and $a_{1}^{\mu} = a_{1}, b_{1}^{\mu} = b_{1}$, that is,
$$h^{\mu} = a_{1}b_{1}a_{2} = (a_{1}a_{2})b_{1}.$$
Note that there exists $\eta \in \Aut(\N(2))$ such that $(a_{1}a_{2})^{\eta} = a_{1}$, $b_{1}^{\eta} = b_{1}$. Thus $h^{\mu\eta} = a_{1}b_{1}$ and $g$ and $a_{1}b_{1}$ are in the same orbit. Therefore,
$$\N(2) = \{1\} \uplus \{c\} \uplus a_{1}^{\Aut(\N(2))} \uplus (a_{1}b_{1})^{\Aut(\N(2))}.$$
\end{proof}

\begin{ex} \label{ex.12}
Let $p$ be an odd prime. Then $\N(p) \times \Alt_5$ is an infinite non-soluble $\BFC$-group with $\omega(\N(p) \times \Alt_5)=12$ (by Lemma \ref{lemma2}). This group is not central-by-finite, so the bound on the number of orbits in Theorem C is a sharp one. 
\end{ex}

\end{document}